\newtheorem{theorem}{Theorem}
\newtheorem{fact}{Fact}
\newtheorem{lemma}[theorem]{Lemma}
\newtheorem{proposition}[theorem]{Proposition}
\newcommand{\be}[1]{\begin{equation}\label{#1}}
\newcommand{\ee}{\end{equation}}
\numberwithin{equation}{section}
\newcommand{\ba}[1]{\begin{align}\label{#1}}
\newcommand{\ea}{\end{align}}
\numberwithin{equation}{section}
\newcommand{\ben}{\begin{equation*}}
\newcommand{\een}{\end{equation*}}
\numberwithin{equation}{section}
\newcommand{\calA}{\mathcal{A}}
\newcommand{\calB}{\mathcal{B}}
\newcommand{\calE}{\mathcal{E}}
\newcommand{\calX}{\mathcal{X}}
\newcommand{\bbZ}{\mathbb{Z}}
\newcommand{\ep}{\varepsilon}
\newcommand{\rk}[1]{\bgroup\color{red}%
  \par\medskip\hrule\smallskip%
  \noindent\textbf{#1}%
  \par\smallskip\hrule\medskip\egroup}
\newcommand{\ol}{\overline}
\newcommand{\lr}[1]{\xleftrightarrow{#1}}
\newcommand{\nlr}[2][]{\overset{#1}{\underset{#2}{\longleftrightarrow}} \kern -17pt
  \times \kern +7 pt}
\newcommand{\slab}{\mathbb{S}_k}
\renewcommand{\P}[1]{\mathbf{P}\!_p\left [ #1 \right]}
\title{Absence of infinite cluster for critical Bernoulli percolation on slabs}
\author{H. Duminil-Copin, V. Sidoravicius, V. Tassion}
\date{\today}
\begin{document}
\maketitle

\begin{abstract}
We prove that for Bernoulli percolation on a graph
$\bbZ^2\times\{0,\dots,k\}$ ($k\ge 0$), there is no infinite cluster
 at criticality,  almost surely. The proof extends to finite range Bernoulli
percolation models on $\bbZ^2$ which are invariant under $\pi/2$-rotation and reflection. 
\end{abstract}

\section{Introduction}

Determining whether a phase transition is continuous or discontinuous
is one of the fundamental questions in statistical physics. Bernoulli
percolation has offered the mathematicians a setup to develop
techniques to prove either continuity or discontinuity of the phase
transition, which in the case of continuity corresponds to the absence
of an infinite cluster at criticality. Harris \cite{Har60} proved that
the nearest neighbor bond percolation model with parameter $1/2$ on
$\mathbb Z^2$ does not contain an infinite cluster almost surely.
Viewed together with Kesten's result that $p_c \leq 1/2$
\cite{kesten1980critical}, it provided the first proof of such type of
statement. Since the original proof of Harris, a few alternative
arguments have been found for planar graphs (See, for example, a short
argument by Y. Zhang \cite[p 311]{Gri99}). In the late eighties,
dynamic renormalization ideas were successfully applied to prove
continuity in octants and half spaces of $\mathbb Z^d$, $d\ge 3$,
\cite{BarGriNew91a,BarGriNew91}. The continuity was also proved for
$\mathbb Z^d$ with $d\ge 19$ using the lace expansion technique
\cite{HS94}, and for non-amenable Cayley graphs using mass-transport
arguments \cite{BLPS99}. Despite all these developments, a general
argument to prove the continuity of the phase transition for the
nearest neighbor Bernoulli percolation on arbitrary lattices is still
missing, and the fact that the Bernoulli percolation undergoes a
continuous phase transition on $\mathbb Z^3$ still represents one of
the major open questions in the field.

This article provides the proof of continuity for
Bernoulli percolation on a class of non-planar lattices, namely slabs. 
We wish to highlight that the lattices $\mathbb Z^d$ with  $d\ge 3$ do not belong to this
class of graphs. 
\medbreak Consider the graph $\slab$, called {\em slab} of
  width $k$, given by the vertex set $\mathbb Z^2 \times \{0,\ldots,k\}$ and
edges between nearest neighbors. In what follows, ${\bf P}\!_p$ denotes the
Bernoulli bond percolation measure with parameter $p$ on $\slab$ defined as follows: every edge of $\mathbb Z^2 \times \{0,\ldots,k\}$ is {\em open} with probability $p$ (if it is not open, it is said to be {\em closed}) independently of the other edges. Let
$p_c(k)$ be the critical parameter of Bernoulli percolation on
$\slab$. Let $B$ be a subset of $\bbZ^3$, the event
$\{0\stackrel{B}{\longleftrightarrow}\infty\}$ denotes the existence
of an infinite path of open edges in $B$ starting from 0.

\begin{theorem}\label{thm:main theorem}
  For any $k>0$, ${\bf
    P}\!_{p_c(k)}[0\stackrel{\slab}\longleftrightarrow \infty]=0$.
\end{theorem}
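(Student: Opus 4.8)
\medskip
\noindent\textit{Proof strategy.} Throughout, write $p_c=p_c(k)\in(0,1)$ and suppose, for contradiction, that $\theta:=\mathbf{P}_{p_c}[0\stackrel{\slab}\longleftrightarrow\infty]>0$. The plan is to run a version of Zhang's argument adapted to the non-planar geometry of the slab, in three parts: (i) upgrade ``$\theta>0$'' to the statement that a suitable crossing event in a large region of $\slab$ has $\mathbf{P}_{p_c}$-probability arbitrarily close to $1$; (ii) a renormalisation (finite-size criterion) showing that such a strong crossing at a parameter $p$ forces $\theta_k(p)>0$, with the relevant scale and threshold \emph{independent of $p$}; (iii) conclude using that crossing probabilities are polynomial, hence continuous, in $p$.

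\medskip
\noindent\textit{Part (i): from an infinite cluster to strong crossings.} Since $\slab$ is quasi-transitive and amenable, the Burton--Keane argument applies, so at $p_c$ there is $\mathbf{P}_{p_c}$-almost surely a \emph{unique} infinite cluster $\calC_\infty$. Because $\theta>0$ and the $\bbZ^2$-action on $\slab$ is ergodic, $\mathbf{P}_{p_c}[\Lambda_n\cap\calC_\infty\neq\emptyset]\to1$ with $\Lambda_n:=[-n,n]^2\times\{0,\dots,k\}$, and since $\calC_\infty$ is infinite and connected this forces $\mathbf{P}_{p_c}[\Lambda_n\longleftrightarrow\partial\Lambda_{2n}]\to1$, where $\partial\Lambda_{2n}$ denotes the lateral boundary. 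The model is invariant under the $\pi/2$-rotation, which permutes the four lateral faces of $\Lambda_{2n}$, so by the square-root trick the probability that $\Lambda_n$ is joined inside $\Lambda_{2n}$ to a \emph{prescribed} lateral face tends to $1$, and so does the probability that all four such connections occur simultaneously. I would then invoke uniqueness of $\calC_\infty$ — in the quantitative form that, on a high-probability event, two macroscopic crossing clusters that both reach infinity are already joined within a bounded multiple of the scale — to glue these connections, and combine them across a grid of overlapping translates of $\Lambda_{2n}$ using FKG and finite-energy rerouting, so as to obtain, for any prescribed $\varepsilon_0>0$ and a suitable large scale $N$, a ``crossing of a long slab-rectangle'' event $\calG_N$ with $\mathbf{P}_{p_c}[\calG_N]\ge1-\varepsilon_0$. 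This is the step where, on $\bbZ^2$, one uses planar duality; here it is the main obstacle (see below).

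\medskip
\noindent\textit{Parts (ii)--(iii): renormalisation and conclusion.} The event $\calG_N$ is to be designed so that a standard block argument goes through: tile $\slab$ by translates of the $N$-box, declare a block \emph{good} when the crossings realising $\calG_N$ are present, use FKG to glue crossings in adjacent good blocks, and dominate the field of good blocks from below (via Liggett--Schonmann--Stacey) by Bernoulli site percolation on $\bbZ^2$ of parameter $1-C\varepsilon_0$. For $\varepsilon_0$ small enough and $N$ large but fixed, the dominating process percolates and yields an infinite open cluster in $\slab$; crucially, the implication ``$\mathbf{P}_p[\calG_N]\ge1-\varepsilon_0\ \Rightarrow\ \theta_k(p)>0$'' then holds for \emph{every} $p$, with $N$ and $\varepsilon_0$ depending only on $k$. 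Applying Part (i) with this $\varepsilon_0$ gives $\mathbf{P}_{p_c}[\calG_N]\ge1-\varepsilon_0$ for some $N$. Since $\calG_N$ depends on finitely many edges, $p\mapsto\mathbf{P}_p[\calG_N]$ is a polynomial, so the inequality persists for all $p\in(p_c-\delta,p_c]$ for some $\delta>0$; by Part (ii), $\theta_k(p)>0$ there, in particular for some $p<p_c(k)$, contradicting $p_c(k)=\inf\{p:\theta_k(p)>0\}$. Hence $\theta=0$, which is the claim.

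\medskip
\noindent\textit{The main obstacle.} Parts (ii)--(iii) are routine renormalisation and continuity; the crux is Part (i), namely converting ``there is a unique infinite cluster at $p_c$'' into ``large slab-regions are crossed with probability close to $1$''. On a planar lattice this is exactly Zhang's argument, which plays primal crossings off against dual ones, and a slab has no planar dual. The substitute must exploit that the width $k$ is \emph{finite}: a cutset separating the two lateral faces of a slab-box is essentially one-dimensional when projected to $\bbZ^2$, which restores an approximate duality, and it is the finiteness of $k$ that keeps the corresponding ``dual'' obstruction under control — for $\bbZ^3$ (width $\infty$) this mechanism collapses, consistent with continuity of the transition there being open. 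The technical heart will be to make the gluing quantitative enough to push the crossing probability all the way to $1-\varepsilon_0$, uniformly in the scale, using only the $\pi/2$-symmetry, the square-root trick, and finite-energy surgery in place of planar duality.
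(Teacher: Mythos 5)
You have correctly reproduced the outer architecture of the argument (which is indeed the paper's): assume $\theta(p_c)>0$, boost this via uniqueness of the infinite cluster and the square-root trick to a crossing event of probability close to $1$, feed it into a finite-size criterion whose probability is polynomial in $p$, and use $4$-dependent renormalisation to percolate at some $q<p_c$, a contradiction. Parts (ii)--(iii) are fine and match the paper. But the proof has a genuine gap exactly where you flag ``the main obstacle'': you never actually produce the argument that glues two open paths in the slab whose $\bbZ^2$-projections intersect but which need not themselves meet. The tools you propose for this step would not close it. ``Quantitative uniqueness at scale $n$'' (two macroscopic clusters reaching infinity are joined within a bounded multiple of the scale) is not an available black box at criticality --- it is essentially equivalent to what must be proved. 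And naive finite-energy surgery fails for a structural reason: conditioning on the existence of the first path creates negative information on the edges adjacent to it, and opening a bounded number of edges to connect a second path to it costs a fixed multiplicative constant, so at best one obtains a glued crossing probability bounded below by $c\,(1-\varepsilon_0)^2$ for some $c<1$ --- whereas the renormalisation step requires the glued probability to exceed $1-\eta$ for a small absolute $\eta$. Your ``approximate duality from one-dimensional cutsets'' is only a heuristic and is not developed into an argument.

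The paper's solution to precisely this point is its main novelty, the Gluing Lemma, proved by a multi-valued map (entropy) principle rather than by FKG or finite energy. One arranges (via a choice of scales $\alpha_n$, $y_n$ and the elementary fact that $\alpha_{3n}\le 4\alpha_n$ infinitely often) that a long crossing $\gamma_{\min}$ from $\ol{S_{3n}}$ to $\ol{Z_n}$ must be approached, in projection, by the two arms emanating from $\ol{S_n'}$; one then considers the set $U$ of sites where the minimal crossing comes within distance $1$ (in projection) of the cluster of $\ol{S_n'}$. If $|U|$ is small, closing the $O(k|U|)$ nearby edges maps the bad event injectively (up to a bounded multiplicative factor) into the unlikely event that one of the two arm connections fails. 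If $|U|$ is large, each $z\in U$ yields a distinct local surgery $\omega^{(z)}$ realising the glued connection, and the multiplicity $|U|\ge t$ beats the per-surgery cost $(2/\min\{p,1-p\})^{C(k)}$, so the bad event has probability at most $(C'/t)\,\mathbf{P}_p[S_{3n}\lr{B_{3n}\cup B_n'}S_n']$. This two-case entropy argument, not finite-energy rerouting, is what pushes the glued crossing probability all the way to $1-\varepsilon$; without it (or an equivalent substitute) your Part (i) does not go through.
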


For {\em site} percolation on $\mathbb S_2$, an {\em ad hoc} argument was provided in 
\cite{DNS12}. Nevertheless, one of the major difficulty of the present theorem is absent 
of \cite{DNS12}, namely the fact that ``crossing paths do not necessarily intersect''. 
This additional phenomenon, which is one of the main reasons why higher dimensional 
critical percolation is so difficult to study, requires the introduction of a new argument, 
based on the multi-valued map principle (see Lemma~\ref{lem:gluing} below for further explanations).

\paragraph{Two generalizations} The same proof works equally well (with suitable modifications) 
for any graph of the form $\mathbb Z^2\times G$, where $G$ is finite. 
This includes $G=\{0,\dots,k\}^{d-2}$ for $d\ge 3$. 

Similarly, symmetric finite range percolation on $\bbZ^2$ can be
treated via the same techniques (once again, relevant modifications
must be done). Let us state the result in this setting. Let ${\bf
  p}\in[0,1]^{\bbZ^2}$ be a set of edge-weight parameters, and  $M>0$.
We consider functions $\bf p$'s that are $M$-supported (meaning ${\bf
  p}_{z}=0$ for $|z|\ge M$) and invariant under reflection and
$\pi/2$-rotation (meaning that for all $z$, $\mathbf p_{\mathrm i
  z}=\mathbf p_{\bar z} =\mathbf p_z$). Consider the graph with vertex
set $\bbZ^2$ and edges between any two vertices and the
percolation ${\bf P}\!_{\bf p}$ defined as follows: the edge $(x,y)$ is
open with probability ${\bf p}_{x-y}$, independently of the other
edges.

\begin{theorem}\label{thm:finite range}
Fix $M>0$.  The probability ${\bf P}\!_{\bf p}[0\longleftrightarrow
\infty]$ is continuous, when viewed as a function defined on the set
of $M$-supported and invariant $\bf p$'s.
\end{theorem}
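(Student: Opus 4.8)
The plan is to establish two ingredients and combine them in a routine way: (i) the map $\theta\colon\mathbf p\mapsto{\bf P}_{\bf p}[0\longleftrightarrow\infty]$ is upper semicontinuous on the (compact, finite\nobreakdash-dimensional) set $\calP_M$ of $M$-supported invariant parameters; and (ii) the exact analogue of Theorem~\ref{thm:main theorem} in the present setting, namely that $\{\mathbf p\in\calP_M:\theta(\mathbf p)=0\}$ is closed --- equivalently, along every ray $\lambda\mapsto\lambda\mathbf p$ there is no infinite cluster at the critical dilation $\lambda_c(\mathbf p)=\inf\{\lambda:\theta(\lambda\mathbf p)>0\}$. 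Throughout I will use that $\theta$ is nondecreasing in each coordinate (standard monotone coupling), that $\calP_M$ is stable under the scalings $\mathbf p\mapsto\lambda\mathbf p$ for $\lambda\in[0,1]$ and under coordinatewise minima, and that $(\bbZ^2,\mathrm{supp}(\mathbf p))$ is an amenable, transitive, finite-energy weighted graph, so that the infinite cluster is unique whenever it exists.

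For (i): writing $\Lambda_n=[-n,n]^2$ and $\partial\Lambda_n=\bbZ^2\setminus\Lambda_{n-1}$, the event $\{0\longleftrightarrow\partial\Lambda_n\}$ depends only on the finitely many edges contained in a bounded enlargement of $\Lambda_n$ (any witnessing path can be stopped the first time it leaves $\Lambda_{n-1}$, and that last edge has length $<M$, so the path stays in $\Lambda_{n+M}$). Hence $\mathbf p\mapsto{\bf P}_{\bf p}[0\longleftrightarrow\partial\Lambda_n]$ is a polynomial, and since $\theta(\mathbf p)=\lim_n{\bf P}_{\bf p}[0\longleftrightarrow\partial\Lambda_n]$ is a decreasing limit of continuous functions, it is upper semicontinuous.

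Ingredient (ii) is the substance of the theorem, and is exactly where ``the same techniques'' must be imported. Fixing $\mathbf p$ with $\theta(\mathbf p)>0$ and working along the one-parameter model $\lambda\mapsto{\bf P}_{\lambda\mathbf p}$, I would rerun the proof of Theorem~\ref{thm:main theorem} with the slab width $k$ replaced by the range $M$: the symmetries of the slab that are genuinely used --- the $\bbZ^2$-translations, the reflections and the $\pi/2$-rotation --- are precisely the hypotheses imposed here; the RSW-type crossing estimates underlying the argument have to be re-derived using only reflection and $\pi/2$-rotation invariance (in the spirit of Bollob\'as--Riordan / Tassion); and, most importantly, the gluing mechanism of Lemma~\ref{lem:gluing} (the multi-valued map principle, which handles the failure of ``crossing paths must intersect'') must be re-proved in the long-range planar geometry. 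I expect this last point to be the main obstacle: the obstruction phenomenon is if anything sharper here, since an open long edge can jump across a dual blocking path, so the combinatorics of the map constructed in Lemma~\ref{lem:gluing}, together with the surrounding scale induction, need real care rather than a formal transcription.

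Granting (i) and (ii), continuity of $\theta$ on $\calP_M$ follows. Upper semicontinuity is (i), so it remains to prove lower semicontinuity, and only at points $\mathbf p$ with $\theta(\mathbf p)>0$ (elsewhere it is trivial since $\theta\ge0$). By (ii), $\lambda_c(\mathbf p)<1$ (otherwise $\theta(\mathbf p)=\theta(\lambda_c\mathbf p)=0$), hence $\theta(\lambda\mathbf p)>0$ for all $\lambda\in(\lambda_c,1]$; by the classical left-continuity of the percolation probability throughout the supercritical regime (a consequence of uniqueness of the infinite cluster on amenable transitive graphs), $\theta((1-\rho)\mathbf p)\uparrow\theta(\mathbf p)$ as $\rho\downarrow0$. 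Given $\ep>0$, pick $\rho>0$ with $\theta((1-\rho)\mathbf p)>\theta(\mathbf p)-\ep$ and set $\delta=\rho\min_{z\in\mathrm{supp}(\mathbf p)}\mathbf p_z>0$. Then for every $\mathbf q\in\calP_M$ with $\|\mathbf q-\mathbf p\|_\infty<\delta$, the coordinatewise minimum satisfies $\mathbf q\wedge\mathbf p\ge(1-\rho)\mathbf p$, so monotonicity gives $\theta(\mathbf q)\ge\theta(\mathbf q\wedge\mathbf p)\ge\theta((1-\rho)\mathbf p)>\theta(\mathbf p)-\ep$. Combined with upper semicontinuity this yields $|\theta(\mathbf q)-\theta(\mathbf p)|<\ep$ for $\mathbf q$ near $\mathbf p$, which proves continuity at $\mathbf p$ and completes the argument.
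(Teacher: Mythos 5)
Your proposal correctly reduces Theorem~\ref{thm:finite range} to two ingredients and supplies the routine glue: upper semicontinuity of $\theta(\mathbf p)={\bf P}_{\bf p}[0\leftrightarrow\infty]$ as a decreasing limit of polynomials in finitely many parameters, and lower semicontinuity at supercritical $\mathbf p$ via the coordinatewise-minimum trick together with monotonicity and the left-continuity of $\lambda\mapsto\theta(\lambda\mathbf p)$ above the critical dilation (which, as you note, follows from Burton--Keane uniqueness on the transitive amenable graph $(\bbZ^2,\mathrm{supp}\,\mathbf p)$). This combination argument is sound and is in fact \emph{more} explicit than what the paper provides: the paper states Theorem~\ref{thm:finite range} and says only that the finite-range case ``can be treated via the same techniques (once again, relevant modifications must be done),'' the sole concrete hint being the remark inside the proof of Fact~\ref{fact:2} that the radius $R$ used in the local surgery must be taken larger in the finite-range setting. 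So both you and the paper leave the substantive ingredient --- the analogue of Theorem~\ref{thm:main theorem} along a ray, equivalently the statement that $\theta(\lambda_c(\mathbf p)\mathbf p)=0$ --- as an unspecified adaptation, and your structural diagnosis of what must be adapted (the symmetry inputs and, crucially, the gluing Lemma~\ref{lem:gluing}) matches the paper's own indication.

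One small inaccuracy in your description of the machinery to be imported: the crossing estimates in the paper are \emph{not} of RSW type. They come from uniqueness of the infinite cluster (giving~\eqref{eq:1}) combined with iterated square-root tricks exploiting reflection and $\pi/2$-rotation symmetry (Lemma~\ref{lem:crossing-with-high-proba}); there is no Russo--Seymour--Welsh box-crossing input anywhere. What has to be re-derived is this square-root/uniqueness mechanism, together with the multi-valued map surgery of Lemma~\ref{lem:gluing} and the $4$-dependent renormalization, in the finite-range geometry. Your observation that the gluing step is the delicate one --- because a long open edge can jump a would-be blocking surface, forcing the local modification to live in a ball of radius comparable to $M$ rather than $2$ --- is correct and is exactly what the paper's Remark on the choice of $R$ is flagging.
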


\paragraph{From the slab to $\bbZ^3$?} The fact that
$\bbZ^2\times\{0,\dots,k\}^{d-2}$ is approximating $\bbZ^d$ when $k$
tends to infinity suggests that the non-percolation on slabs could
shed a new light on the problem of proving the absence of infinite
cluster (almost surely) for critical percolation on $\bbZ^d$.
Nevertheless, we wish to highlight that this is not immediate. Indeed,
while $p_c(k)$ is known to converge to $p_c(\bbZ^3)$
\cite{grimmett1990supercritical}, passing at the limit requires a new
ingredient. For instance, a uniform control (in $k$) on the explosion
of the infinite-cluster density for $p$ tending to the critical point
would be sufficient.
 \begin{proposition}\label{prop:explosion}
Let $f:[0,1]\rightarrow\mathbb R$ be a continuous function such that $f(0)=0$. 
If for any $k\ge0$ and any $p\in(0,1)$,
$${\bf P}\!_p {[0\stackrel{\slab}\longleftrightarrow \infty]}\le f(p-p_c(k)),$$
then ${\bf P}\!_{p_c(\bbZ^3)}[0\stackrel{\bbZ^3}\longleftrightarrow \infty]=0$.
\end{proposition}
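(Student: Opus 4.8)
The plan is to control $\theta(p):={\bf P}\!_p[0\stackrel{\bbZ^3}\longleftrightarrow\infty]$ for $p$ slightly above $p_c(\bbZ^3)$, which by monotonicity of $\theta$ is enough: we only need $\theta(p)\to 0$ as $p\downarrow p_c(\bbZ^3)$. Recall from \cite{grimmett1990supercritical} that $p_c(k)\downarrow p_c(\bbZ^3)$; in particular the slab $\bbZ^2\times\{-k,\dots,k\}\cong\mathbb S_{2k}$ is supercritical at any fixed $p>p_c(\bbZ^3)$ once $k$ is large, and $p_c(\bbZ^3)=p_c(\bbH)$ for the half-space $\bbH:=\bbZ^2\times\{0,1,2,\dots\}$ (since $\mathbb S_j\subseteq\bbH\subseteq\bbZ^3$). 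I would first establish, for every $p>p_c(\bbZ^3)$, the identity
$$\theta(p)=\lim_{k\to\infty}\psi_k(p),\qquad\psi_k(p):={\bf P}\!_p\bigl[0\longleftrightarrow\infty\text{ in }\bbZ^2\times\{-k,\dots,k\}\bigr].$$
The bound $\psi_k\le\theta$ and the monotonicity of $\psi_k$ in $k$ are clear. For the reverse inequality, use the Burton--Keane uniqueness of the infinite cluster $\calC$ in $\bbZ^3$: on $\{0\stackrel{\bbZ^3}\longleftrightarrow\infty\}$ we have $0\in\calC$; fixing $k$ large, the infinite cluster $\calD$ of $\bbZ^2\times\{-k,\dots,k\}$ satisfies $\calD\subseteq\calC$ (being infinite in $\bbZ^3$), so $0$ is joined to $\calD$ by a \emph{finite} open path $\pi$ in $\bbZ^3$; since $\pi$ is finite, $\pi\cup\calD$ lies in $\bbZ^2\times\{-k',\dots,k'\}$ for some $k'\ge k$, whence $0\longleftrightarrow\infty$ inside that slab. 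Thus $\{0\stackrel{\bbZ^3}\longleftrightarrow\infty\}$ is contained, up to a null event, in $\bigcup_{k'}\{0\longleftrightarrow\infty\text{ in }\bbZ^2\times\{-k',\dots,k'\}\}$, giving the identity.

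The same argument --- now exhausting $\bbH$ by the slabs $\mathbb S_j$, on whose common face the vertex $0$ lies --- shows ${\bf P}\!_p[0\longleftrightarrow\infty\text{ in }\bbH]=\lim_{j\to\infty}\theta_j(p)$ for $p>p_c(\bbZ^3)$, where $\theta_j(p):={\bf P}\!_p[0\stackrel{\mathbb S_j}\longleftrightarrow\infty]$ is exactly the quantity bounded in the hypothesis. Combining this with the hypothesis, the continuity of $f$ and $p_c(j)\to p_c(\bbZ^3)$, we get ${\bf P}\!_p[0\longleftrightarrow\infty\text{ in }\bbH]\le f(p-p_c(\bbZ^3))$ for all $p>p_c(\bbZ^3)$; monotonicity and $f(0)=0$ then give ${\bf P}\!_{p_c(\bbZ^3)}[0\longleftrightarrow\infty\text{ in }\bbH]=0$. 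This reproves the half-space statement of \cite{BarGriNew91a,BarGriNew91}, but it is not yet the conclusion we are after: a bound on the half-space density (equivalently, on the density of $0$ seen from a \emph{face} of a slab) does not obviously bound $\theta(p)$, the density of $0$ seen from the \emph{middle} of a two-sided slab.

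Closing this gap is the main obstacle, and I expect essentially all the work to lie there. No monotonicity helps --- enlarging a slab only raises the connection probability, and moving the marked vertex toward a face only lowers it --- so one genuinely has to convert a bound on a face density into a bound on a bulk density. I would attempt this using uniqueness of the infinite cluster $\calD_{2k}$ in $\bbZ^2\times\{-k,\dots,k\}$ together with a gluing argument: $\calD_{2k}$ almost surely touches the bottom face, and on $\{0\in\calD_{2k}\}$ one connects $0$ to that face inside the slab and tries to charge the cost of this connection to a face density $\theta_j(p)$, the delicate point being to organise the argument so as to extract a \emph{single} factor of a face density rather than to sum over all boundary vertices (which diverges); a renormalisation against the hypothesis's uniform bound may be needed here. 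Granting such a bound $\psi_k(p)\le g\bigl(\theta_{j(k)}(p)\bigr)$ with a fixed continuous $g$ satisfying $g(0)=0$, letting $k\to\infty$ yields $\theta(p)\le g\bigl(f(p-p_c(\bbZ^3))\bigr)$, and $p\downarrow p_c(\bbZ^3)$ together with $f(0)=0=g(0)$ finishes the proof.
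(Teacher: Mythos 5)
Your first step --- deducing $\theta(p)=\lim_m\psi_m(p)$ from uniqueness of the $\bbZ^3$ infinite cluster together with supercriticality of the wide slabs --- is exactly the mechanism behind the paper's proof, and your second step (same argument for the half-space $\bbH$, giving $\mathbf P_p[0\leftrightarrow\infty\text{ in }\bbH]=\lim_j\theta_j(p)$) reproduces, essentially verbatim, the identity the paper actually writes: the paper asserts
\begin{equation}
\mathbf P_p\big[0\stackrel{\bbZ^3}\longleftrightarrow\infty\big]=\mathbf P_p\Big[\bigcup_{k\ge0}\{0\stackrel{\slab}\longleftrightarrow\infty\}\Big]=\lim_{k\to\infty}\mathbf P_p\big[0\stackrel{\slab}\longleftrightarrow\infty\big],
\end{equation}
and then plugs in the hypothesis and $p_c(k)\to p_c(\bbZ^3)$.

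The point you raise --- that the right-hand side of this identity is the \emph{half-space} density $\mathbf P_p[0\longleftrightarrow\infty\text{ in }\bbZ^2\times\{0,1,\dots\}]$, because each $\slab=\bbZ^2\times\{0,\dots,k\}$ has $0$ on a face, whereas the left-hand side is the full-space density --- is exactly where the paper's proof is terse, and your worry is substantive: for $p>p_c(\bbZ^3)$ the inclusion $\{0\leftrightarrow\infty\text{ in }\bbH\}\subset\{0\leftrightarrow\infty\text{ in }\bbZ^3\}$ is strict with positive probability (close the five half-space edges at $0$ and escape through $(0,0,-1)$), so the displayed identity cannot hold as an equality, and the direction $\le$ that the argument needs is precisely the nontrivial one. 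The uniqueness argument only yields a finite open path from $0$ to the slab's infinite cluster inside $\bbZ^3$, which may use negative heights; it does \emph{not} give an infinite open path from $0$ inside $\slab$. In other words, the paper proves the half-space statement but does not visibly cross from there to $\bbZ^3$, which is exactly the gap you flag.

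So: your proposal matches the paper up to and including obtaining $\mathbf P_{p_c(\bbZ^3)}[0\leftrightarrow\infty\text{ in }\bbH]=0$, and it correctly identifies the remaining step (converting a face-density bound into a bulk-density bound) as the genuine difficulty. What you have not done is supply that step --- your sketch of a function $g$ with $\psi_k(p)\le g(\theta_{j(k)}(p))$ is only a plan, and the obvious FKG/vertical-column trick gives $\psi_m\le p^{-m}\theta_{2m}$, which blows up as $m\to\infty$ and hence does not close the gap. As written, your proof is therefore incomplete, but you have correctly located the missing ingredient; the paper's proof, read literally, is silent on exactly the same point, so this is not a defect of your approach relative to theirs. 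If you want a self-contained statement you can actually prove with your argument, replace the conclusion of the proposition by $\mathbf P_{p_c(\bbZ^3)}[0\longleftrightarrow\infty\text{ in }\bbZ^2\times\{0,1,\dots\}]=0$, or strengthen the hypothesis to a uniform bound $\mathbf P_p[v\stackrel{\slab}\longleftrightarrow\infty]\le f(p-p_c(k))$ over \emph{all} vertices $v\in\slab$, under which your step~1 finishes the proof directly.
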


It is natural to expect that proving the existence of $f$ is roughly of the same 
difficulty as attacking the problem directly on $\bbZ^3$. Nevertheless, it could 
be that a suitable renormalization argument enables one to prove the existence 
of $f$.

Let us finish by recalling that several models undergo discontinuous phase 
transitions in high dimension and continuous phase transition in two 
dimensions (one may think of the 3 and 4-state Potts models). For 
most of these models, a discontinuous phase transition is expected 
to occur already in a slab. Theorem~\ref{thm:main theorem} shows 
that this is not the case for Bernoulli percolation. 

\paragraph{What about other models?} While this work is focused on the continuity of the phase transition for short range models, it is
well known that the complete picture of phase transition for
Bernoulli percolation is more complex. For one-dimensional long-range
Bernoulli systems with power law decay,
the transition may be discontinuous.  Indeed,  when the probabilities of edges of length $r$ being open decay as
$1/r^2$,  the percolation density at criticality is strictly positive, see \cite{AN86}.

Also, one may consider more general percolation models with dependence.
On $\bbZ^2$, the continuity of the phase transition was recently
proven \cite{DST14} for dependent percolation models known as random-cluster models with cluster-weight $q\in[1,4]$ (the special case $q=1$ corresponds to Bernoulli percolation). The continuity of the phase transition for $q = 1$ and $2$ was previously established
by Harris \cite{Har60} and Onsager \cite{Ons44} respectively.
Furthermore,  \cite{LaaMesMir91} showed that the phase transition is discontinuous for $q$ large enough.

Let us conclude this introduction by mentioning that  
 the phase transition on $\bbZ^d$ is expected to be discontinuous for $q>4$ when $d=2$ (we refer to \cite{Dum13} for details on this prediction), and for $q>2$ when $d\ge3$.
The best results (for $q>1$) in this direction are mostly restricted to integer values of $q$, for which the model is related to the Potts model. On the one hand, the fact that the phase transition is continuous for $q=2$ (corresponding to the Ising model) is known for any $d\ge 3$ \cite{AizDumSid13}. On the other hand for any $q\ge 3$, the random-cluster model undergoes a discontinuous phase transition above some dimension $d_c(q)$ \cite{BisChaCra06}. The proof of this result is based on Reflection-Positivity for the Potts model. 

\paragraph{Notation.} 
For a subset  $E$ of $\bbZ^2$, let $\overline {E}$ be the set of sites
in $\slab$ whose two first coordinates are in $E$.
 A \emph{cluster in $\overline E$} is a connected
component of the graph given by all the vertices in $\ol E$  and the
open edges with two endpoints in $\ol E$.
 Let $n$ be a positive integer,
$B$ a subset of $\mathbb Z^2$, and $X,Y\subset B$. We define
\begin{align}
  &X \lr{B} Y=\{\text{there exists an open cluster in $\overline {B}$ connecting
    $\overline{X}$ to $\overline{Y}$}\},\\
  &X \lr{!B!} Y=\{\text{there exists a \emph{unique} open cluster in $\overline {B}$ connecting
    $\overline{X}$ to $\overline {Y}$}\}.
\end{align}
Further we use the following notations: $B_n=[-n,n]^2$ and $\partial
B_n=B_n\setminus B_{n-1}$. \medbreak
\section{Proof}

\paragraph{Outline of the proof.} We follow a well known approach: we assume that ${\bf
  P}\!_p[0\stackrel{\slab}\longleftrightarrow \infty]>0$, and using this, we
  construct a finite-size criterion which is sufficient for percolation
to occur. By continuity, this finite-size criterion is satisfied for percolation
with parameters sufficiently close to $p$. This immediately implies
that ${\bf P}\!_{p_c}[0\stackrel{\slab}\longleftrightarrow \infty]=0$.

\bigskip

\noindent The proof is divided in three steps:
\begin{itemize}
\item First, we prove that ${\bf P}\!_p[0\stackrel{\slab}\longleftrightarrow \infty]>0$ 
  implies the
  existence of a certain event with a large probability. This step is
  new, and in particular, we invoke a gluing lemma to estimate probability of connections 
  between open paths.
\item The second step is classical. It consists in applying a block argument  to deduce that percolation occurs for any $q$ 
sufficiently close to $p$. 
\item The last step provides the proof of the gluing lemma. 
This lemma provides an answer   to a difficulty
encountered when doing renormalization in 3-dimensions 
(e.g. in \cite{grimmett1990supercritical}) {\em in the case of slabs}. When trying to 
construct long open connections by connecting two open 
paths together, the conditioning on the first path creates 
negative information along the path. As a consequence, 
one may construct open paths coming at distance one of 
the existing path, but the last edge can potentially be already 
explored and closed. This difficulty is one of the major obstacles 
in using a renormalization scheme to prove that 
${\bf P}_{p_c}[0\stackrel{\bbZ^3}\longleftrightarrow \infty]=0$. 
In our case, the fact that slabs are quasi-planar enables us 
to overcome this difficulty.\end{itemize}
\noindent
{\em From now on in this section, we fix $p$ and $k$ and 
we assume that $${\bf P}_p\big[0\stackrel{\slab}\longleftrightarrow \infty\big]>0.$$ 
Since the ambient space is fixed, we will not refer to $\slab$ 
and will rather write $X\longleftrightarrow Y$ instead of 
$X\stackrel{\slab}\longleftrightarrow Y$.}

\subsection{The finite-size criterion} 

The infinite cluster in $\slab$  being unique almost
surely~\cite{AKN87,burton1989uniqueness}, one can
construct a sequence $(u_n)_{n\ge1}$ such that $u_n\le n/3$ and 
\begin{align}
  \lim_{n\rightarrow \infty}\P{B_{u_n}\lr{!B_n!}\partial B_n}=1.\label{eq:1}
\end{align}
For simplicity, we set $S_n=B_{u_n}$. For 
$0\leq\alpha\leq\beta\leq n$, we define the following  event:
\begin{equation*}
 \calE_n(\alpha,\beta)=\big\{S_n\lr{B_n}\{n\}\times[\alpha,\beta]\big\}.
\end{equation*} 

\begin{figure}[h]
\centering
\begin{subfigure}{.52\textwidth}
  \centering
  \includegraphics[width=1.00\linewidth]{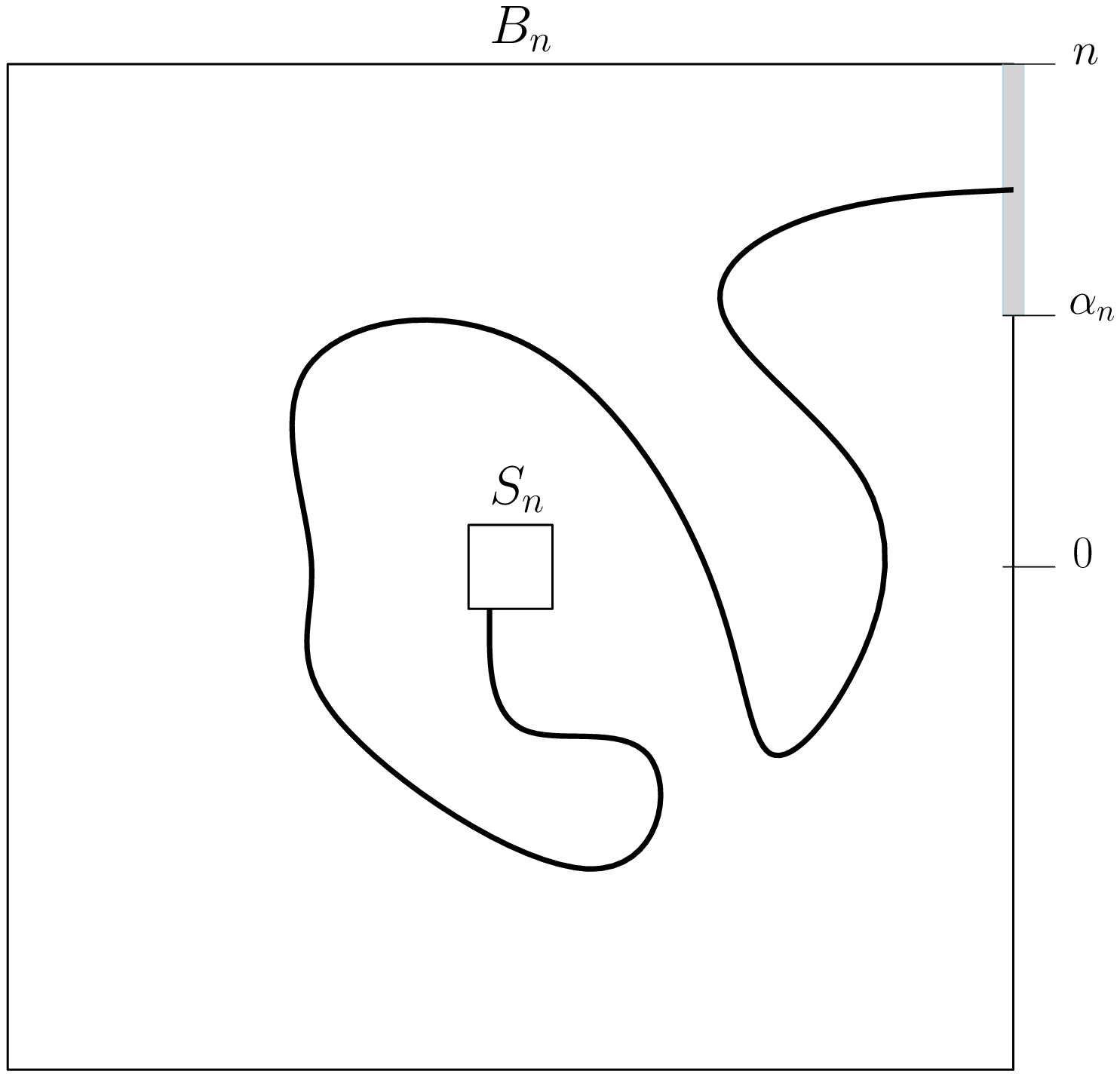}
  \caption{The event $\calE_n(\alpha_n,n)$.}
  \label{fig:sub1}
\end{subfigure}%
\begin{subfigure}{.52\textwidth}
  \centering
  \includegraphics[width=1.00\linewidth]{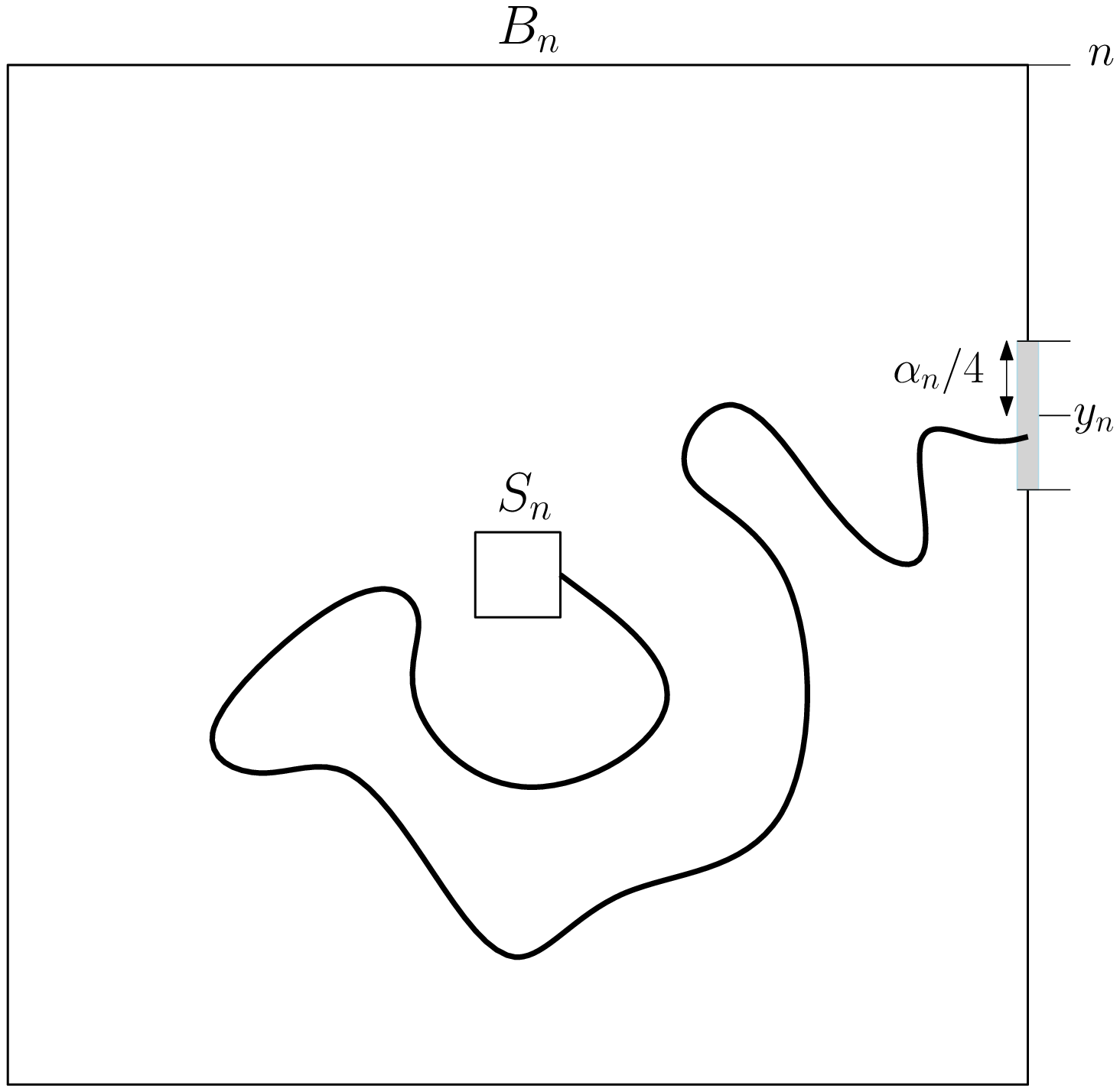}
  \caption{The event $\calE_n(y_n-\alpha_n/4,y_n+\alpha_n/4)$.}
  \label{fig:sub2}
\end{subfigure}
\medskip
\caption{The two events of Lemma~\ref{lem:crossing-with-high-proba}.}
\label{fig:test}
\end{figure}

\begin{lemma}
  \label{lem:crossing-with-high-proba}
  There exist two sequences $(y_n)$ and $(\alpha_n)$ with values in $[0,n]$, such that
  \begin{align}
    &\lim_{n\to\infty}\P{\calE_n(\alpha_n,n)}=1,\\
    &\lim_{n\to\infty}\P{\calE_n(y_n-\alpha_n/4,y_n+\alpha_n/4)}=1.
  \end{align}
\end{lemma}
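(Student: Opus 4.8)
The plan is to extract the two estimates from the finite-size input \eqref{eq:1}, the symmetries of the model (reflections and $\pi/2$-rotations, each of which fixes the boxes $B_n$ and $S_n=B_{u_n}$), and repeated applications of the square-root trick for increasing events. First I would show that $\P{\calE_n(0,n)}\to 1$. Indeed, \eqref{eq:1} gives in particular $\P{S_n\lr{B_n}\partial B_n}\to1$; writing $\partial B_n$ as the union of its four sides, the four associated connection events are increasing and, by $\pi/2$-rotation invariance, have equal probability, so the square-root trick yields $\P{S_n\lr{B_n}\{n\}\times[-n,n]}\to 1$; splitting the right side $\{n\}\times[-n,n]$ into its two halves, which are exchanged by the reflection $y\mapsto-y$, and applying the square-root trick once more gives $\P{\calE_n(0,n)}\to1$. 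I then set $r_n:=\P{\calE_n(0,n)}$ and $s_n:=\sqrt{1-r_n}\to0$.

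Next, for every integer $\alpha\in[0,n]$ there is the equality of events $\calE_n(0,n)=\{S_n\lr{B_n}\{n\}\times[0,\alpha]\}\cup\calE_n(\alpha,n)$, so the square-root trick yields the dichotomy
\[
\max\bigl(\P{S_n\lr{B_n}\{n\}\times[0,\alpha]},\ \P{\calE_n(\alpha,n)}\bigr)\ \ge\ 1-s_n.
\]
I would then define $\alpha_n$ to be the largest $\alpha\in[0,n]$ with $\P{\calE_n(\alpha,n)}\ge1-s_n$. This is well defined: $\alpha=0$ qualifies since $r_n\ge1-\sqrt{1-r_n}$; and since $S_n$ is connected within $B_n$ to any fixed segment $\{n\}\times[h,h']$ of bounded length with probability bounded away from $1$ (it suffices to close all edges incident to that segment), one has $\alpha_n\le n-C_k$ for $n$ large. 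By construction $\P{\calE_n(\alpha_n,n)}\ge1-s_n\to1$, which is the first assertion.

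For the second assertion I would exploit the maximality of $\alpha_n$: since $\P{\calE_n(\alpha_n+1,n)}<1-s_n$, the dichotomy above applied at $\alpha=\alpha_n+1$ forces $\P{S_n\lr{B_n}\{n\}\times[0,\alpha_n+1]}\ge1-s_n$. Splitting the segment $\{n\}\times[0,\alpha_n+1]$ into two halves, each of width at least $\alpha_n/2$, and applying the square-root trick a last time produces one half, say $\{n\}\times[a_n,b_n]$, with $\P{S_n\lr{B_n}\{n\}\times[a_n,b_n]}\ge1-\sqrt{s_n}$. Choosing $y_n$ to be a suitable integer near the midpoint of $[a_n,b_n]$ gives $[y_n-\alpha_n/4,\,y_n+\alpha_n/4]\subseteq[a_n,b_n]$ (the few $\pm O(1)$ integer adjustments are routine and can be absorbed into $C_k$), whence $\P{\calE_n(y_n-\alpha_n/4,y_n+\alpha_n/4)}\ge1-\sqrt{s_n}\to1$.

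The integer roundings and the elementary estimate $\alpha_n\le n-C_k$ are the only routine points. The step I expect to be the real crux is the choice of $\alpha_n$ as a threshold: this is exactly what makes a single parameter govern both the almost-full window $[\alpha_n,n]$ appearing in the first event and a window of comparable width appearing in the second, while everything else is a bookkeeping of square-root-trick applications and symmetries.
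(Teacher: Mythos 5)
Your argument follows essentially the same route as the paper: square-root trick plus symmetry to get $\P{\calE_n(0,n)}\to1$, a threshold choice of $\alpha_n$ making both $\P{\calE_n(\alpha_n,n)}$ and $\P{\calE_n(0,\alpha_n+1)}$ at least $1-\sqrt{1-\P{\calE_n(0,n)}}$ (the paper instead takes $\alpha_n$ to be the point where the inequality between $\P{\calE_n(0,\alpha-1)}$ and $\P{\calE_n(\alpha,n)}$ reverses, which has the same effect), and one further halving to locate $y_n$. One small correction in the last step: the inclusion you need is $[a_n,b_n]\subseteq[y_n-\alpha_n/4,\,y_n+\alpha_n/4]$ — connecting to the \emph{larger} target is the easier event, so monotonicity goes the other way from what you wrote — which is arranged by taking the halves to have width (about) $\alpha_n/2$ and $y_n$ to be their midpoints, exactly as in the paper where the two halves of $[0,\alpha_n]$ are literally $[y_n-\alpha_n/4,y_n+\alpha_n/4]$ for $y_n\in\{\alpha_n/4,3\alpha_n/4\}$.
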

The proof relies on the following classical inequality, which is a
straightforward consequence of the Harris-FKG inequality. Let
$\calA_1,\dots,\calA_m$ be $m$ increasing events. Then
\begin{equation}
\max_{i=1,\dots,m}\P{\calA_i}\ge
1-(1-\P{\calA_1\cup \dots\cup \calA_m})^{1/m}.\label{eq:3}
\end{equation}
When the events are of equal probability, this inequality is known as
``square-root trick''. We use the same name for the
generalization given by~\eqref{eq:3}.

\begin{proof}[Proof of Lemma~\ref{lem:crossing-with-high-proba}]

Applying the square-root trick and using the symmetries of the box, we obtain
\begin{equation*}
  \P{\calE_n(0,n)}\geq 1-\left(1-\P{S_n\lr{B_n}\partial B_n}\right)^{1/8}
  \end{equation*}
  which implies that $\P{\calE_n(0,n)}$ also tends to 1 as $n$ goes to infinity.
  Now,  for $\alpha \in\{0,\ldots, n-1\}$ we will use  the decomposition  
  \begin{equation}
        \calE_n(0,n)=\calE_n(0,\alpha)\cup\calE_n(\alpha+1,n).
  \end{equation}
  The probability of the event $\calE_n(0,0)$ is smaller than  some constant $c<1$ uniformly in $n$ and $\P{\calE_n(0,n)}$ tends to $1$, providing
  that for $n$ large enough:
  \begin{equation}
         \P{\calE_n(0,0)}<\P{\calE_n(1,n)}.
  \end{equation}
  In the same way, we also have for $n$ large enough
   \begin{equation}
         \P{\calE_n(0,n-1)}>\P{\calE_n(n,n)}.
  \end{equation}
  The two inequalities above ensure that the inequality between
  $\P{\calE_n(0,\alpha-1)}$ and $\P{\calE_n(\alpha,n)}$ reverses for a
    non-trivial $\alpha$. More precisely we can define
    $\alpha_n\in\{1,\ldots,n-1\}$ by
  \begin{equation*}
    \alpha_n=\max\big\{\alpha\le n-1 : \:
    \P{\calE_n(0,\alpha-1)}<\P{\calE_n(\alpha,n)}\big\},
  \end{equation*}
  and this choice implies that
  \begin{equation}
    \P{\calE_n(0,\alpha_n-1)}<\P{\calE_n(\alpha_n,n)}\text{ and
    }\P{\calE_n(0,\alpha_n)}\ge\P{\calE_n(\alpha_n+1,n)}.
  \end{equation}
Therefore, two other uses of the square-root trick imply that
$\P{\calE_n(0,\alpha_n)}$ and $\P{\calE_n(\alpha_n,n)}$ are larger
than $1-(1-\P{\calE_n(0,n)})^{1/2}$ and thus tends to $1$ when $n$
goes infinity.
Finally, we decompose
\begin{equation}
  \calE_n(0,\alpha_n)= \calE_n(0,\alpha_n/2)\cup\calE_n(\alpha_n/2,\alpha_n)
\end{equation}
and a last application of the square root trick allows to define
$y_n=\alpha_n/4$ or $y_n=3\alpha_n/4$ such that
\begin{equation}
\P{\calE_n(y_n-\alpha_n/4,y_n+\alpha_n/4)}\ge 1- \sqrt{1-\P{ \calE_n(0,\alpha_n)}},
\end{equation}
which concludes the proof of the lemma. 
\end{proof}

\begin{lemma}\label{lem:infinite number}
  There exist infinitely many $n$ such that $\alpha_{3n}\leq 4\alpha_n$.
\end{lemma}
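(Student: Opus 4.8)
The plan is to argue by contradiction, exploiting nothing about the sequence $(\alpha_n)$ beyond the two elementary properties that come directly from its definition in Lemma~\ref{lem:crossing-with-high-proba}: each $\alpha_m$ is a \emph{positive} integer (so $\alpha_m\ge 1$) and it is bounded by $\alpha_m\le m-1<m$. Suppose the conclusion fails, i.e.\ only finitely many $n$ satisfy $\alpha_{3n}\le 4\alpha_n$. Then there is an integer $N$ with
\[
\alpha_{3n}>4\alpha_n\qquad\text{for every }n\ge N .
\]

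Next I would iterate this inequality along the geometric progression $N,\,3N,\,9N,\dots$: since $3^kN\ge N$ for every $k\ge 0$, the displayed inequality applies with $n=3^kN$ at each stage, and a trivial induction gives
\[
\alpha_{3^kN}>4^k\,\alpha_N\ge 4^k\qquad\text{for all }k\ge 0,
\]
using $\alpha_N\ge 1$ in the last step. On the other hand, the crude upper bound $\alpha_m<m$ applied at $m=3^kN$ yields $\alpha_{3^kN}<3^kN$. Combining the two estimates gives $4^k<3^kN$, that is $(4/3)^k<N$ for every $k$, which is absurd once $k$ is large enough. This contradiction proves the lemma.

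There is essentially no obstacle here; the argument is a one‑line pigeonhole/iteration. The only points to be slightly careful about are that the indices $3^kN$ stay in the range $\ge N$ where the negated hypothesis is valid (automatic, since we only iterate by factors of $3$), and that one uses only the qualitative bound $\alpha_m<m$ from Lemma~\ref{lem:crossing-with-high-proba} rather than any quantitative information about how $\alpha_n$ depends on $n$.
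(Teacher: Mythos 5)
Your proof is correct and is exactly the paper's argument made explicit: the paper's one-line proof (``a sequence of positive integers with $\alpha_{3n}>4\alpha_n$ for large $n$ grows super-linearly, contradicting $\alpha_n\le n$'') is precisely your iteration $\alpha_{3^kN}>4^k\alpha_N\ge 4^k$ versus $\alpha_{3^kN}<3^kN$. No issues.
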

\begin{proof}
A sequence of positive integers such that $\alpha_{3n}>4\alpha_n$ for $n$ 
large enough grows super-linearly. Since $\alpha_n\le n$, we obtain the 
result.\end{proof}
\begin{figure}[h]
  \centering
  \includegraphics[width=9cm]{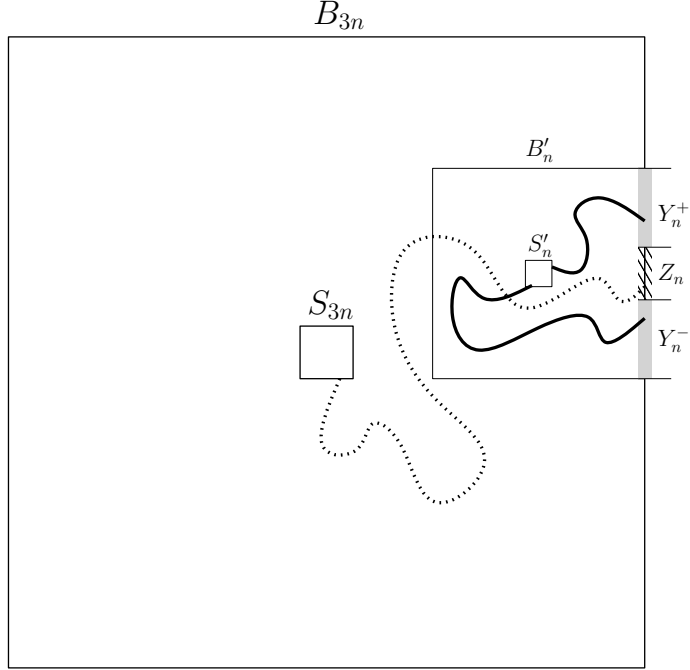}
  \caption{The events $S_{3n}\lr{B_{3n}}Z_n$ (the path is depicted by dots) 
  and $\{S_n'\lr{B_n'}Y_n^-\}\cap\{S_n'\lr{B_n'}Y_n^+\}$ (the paths are 
  depicted in bold).}
  \label{fig:AB}
\end{figure}
Let $n\ge 1$. Write $y=y_{3n}$ and define the following five subsets
of $\bbZ^2$ (see Fig.~\ref{fig:AB}  
for an illustration):
\begin{align*}
  B_n'&=(2n,y)+B_n,\\
 S_n'&=(2n,y)+S_n,\\
  Y_n^+&=\{3n\}\times[y+\alpha_n,y+n],\\
  Y_n^-&=\{3n\}\times[y-n,y-\alpha_n],\\
  Z_n&=\{3n\}\times[y-\alpha_n,y+\alpha_n].
\end{align*}
 When $n$ is  such that $\alpha_{3n}/4\le\alpha_n$, we have
 \begin{equation}
 \P{S_{3n}\lr{B_{3n}}Z_n} \ge \P{\calE_{3n}(y_{3n}-\alpha_{3n}, y_{3n}+\alpha_{3n})},
\end{equation}
and Lemmata~\ref{lem:crossing-with-high-proba} and \ref{lem:infinite number} imply that
\begin{equation}
  \label{eq:2}
    \limsup_{n\to \infty}\P{S_{3n}\lr{B_{3n}}Z_n}=1.
\end{equation}

Using Harris inequality and the invariance of $\mathbf P_p$ under reflection, we deduce that 
\begin{align*}
\P{S_{3n}\lr{B_{3n}}Z_n,S_n'\lr{B_n'}Y_n^-,S_n'\lr{B_n'}Y_n^+}&\ge \P{S_{3n}\lr{B_{3n}}Z_n}\P{\calE_n(0,\alpha_n)}^2.
\end{align*}
From Lemma~\ref{lem:crossing-with-high-proba} and
\eqref{eq:2}, we finally obtain
\begin{equation}\label{eq:crucial 1}
\limsup_{n\rightarrow \infty} \P{S_{3n}\lr{B_{3n}}Z_n,S_n'\lr{B_n'}Y_n^-,S_n'\lr{B_n'}Y_n^+}=1.
\end{equation}
 
We now intend to construct a path from $\ol{S_{3n}}$ to $\ol{S'_n}$. Projections of
paths from $\ol{S_{3n}}$ to $\ol{Z_n}$ and from $\ol{S_n'}$ to $\ol{Y_n^-}$ and $\ol{Y_n^+}$
must intersect (as illustrated on Fig.~\ref{fig:AB}), but the paths themselves 
have no reason to do so. This is one of the main difficulties when working 
with non-planar graphs.  Let us assume for a moment that we have the following 
lemma at our disposition and let us finish the proof. Note that this lemma is 
a crucial ingredient of the proof, since it solves the problem of the intersection 
of paths on slabs. 

\begin{lemma}[Gluing Lemma]\label{lem:gluing}
  For any $\ep>0$, there exists $\delta=\delta(\ep,k)>0$ such that for any $n$, 
$$\P{S_{3n}\lr{B_{3n}}Z_n,S_n'\lr{B_n'}Y_n^-,S_n'\lr{B_n'}Y_n^+} \ge 1-\delta$$
implies
$$ \P{S_{3n}\lr{B_{3n}\cup B_n'}S_n'}\ge 1-\ep.$$
\end{lemma}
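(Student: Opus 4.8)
The plan is to exploit the quasi-planarity of the slab: a crossing from $\ol{S_{3n}}$ to $\ol{Z_n}$ and two crossings from $\ol{S_n'}$ to $\ol{Y_n^-}$ and $\ol{Y_n^+}$ have projections onto $\bbZ^2$ that necessarily intersect, so along some vertical line the three paths pass through a common column $\{x\}\times\bbZ\times\{0,\dots,k\}$, but at possibly different heights in the $\{0,\dots,k\}$ direction. The difficulty flagged in the outline is that conditioning on the $\ol{S_{3n}}$-to-$\ol{Z_n}$ path reveals closed edges along it, so one cannot simply open a short bridge in the column connecting it to the $\ol{S_n'}$-paths. First I would set up a deterministic combinatorial statement: on the event in the hypothesis, there is a column (more precisely a bounded region, a ``tube'' of $\bbZ^2$-width $O(1)$ around a vertical line) in which the first path and (one of) the second paths come within bounded graph-distance, and one can reroute to create a connection using only edges inside a box of bounded size in that tube. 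The key point is to phrase this as: a configuration realizing the three crossings, together with all edges open inside one such bounded box $T$, forces $S_{3n}\lr{B_{3n}\cup B_n'}S_n'$.

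Second, I would invoke the multi-valued map principle (the ``main new argument'' the authors advertise). The issue is that there is no \emph{canonical} column where the intersection happens and no canonical bounded box $T$ whose edges we want to open; instead there are many candidate columns (one per $x$-coordinate where the projections overlap, of which there are at least $\Omega(n)$, and for each a bounded number of candidate heights $\le k+1$). I would define a multi-valued map from configurations satisfying the three crossings to pairs (configuration, choice of box $T$), and conversely: given the three crossings hold, list the candidate boxes $T_1,\dots,T_L$ with $L$ large; if for \emph{every} candidate box the conditional probability that all its edges are open — given the three-crossing event and given everything outside $T_j$ — were small, we would derive, via a union/averaging bound over the $L$ candidates together with the bounded number of edges in each $T_j$ (at most some $N=N(k)$), that the ``bridge never forms'' event has probability at least $1-\text{(something like } L\cdot p^{N})$, which is close to $1$; but this must be reconciled with the fact that opening any one bridge already yields the desired connection. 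Concretely: let $A$ be the three-crossing event, $C$ the target connection event. On $A\setminus C$, none of the $L$ candidate boxes is fully open, so $\P{A\setminus C}\le \P{A, \text{no }T_j \text{ fully open}}$. Using FKG / the fact that non-occurrence of ``$T_j$ fully open'' is a decreasing event together with positive association, one bounds this by roughly $(1-p^{N})^{L}$. Choosing $n$ large makes $L$ large, so $(1-p^N)^L\le \ep/2$, say; combined with $\P{A}\ge 1-\delta$, we get $\P{C}\ge \P{A}-\P{A\setminus C}\ge 1-\delta-(1-p^N)^L$, and picking $\delta$ small finishes it. The subtlety is that $L$ is not deterministic — it depends on where the crossing paths actually go — so one must be careful: one shows that on $A$ there are at least $L_0(n)\to\infty$ candidate boxes regardless, because the projections must overlap along an interval of length $\Omega(n)$ in the $x$-direction (the boxes $B_{3n}$ and $B_n'$ overlap in a region of linear size, and the crossings are forced to traverse it).

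The main obstacle, I expect, is exactly the non-canonicity just described: making precise the deterministic combinatorial lemma that ``three crossings $+$ one fully-open bounded box $\Rightarrow$ connection,'' uniformly over which box, and simultaneously ensuring the boxes are disjoint enough (or can be made conditionally independent) that the union bound $(1-p^N)^L$ is legitimate rather than an illusion created by overlap. This is where the multi-valued map principle does real work: rather than trying to pick one box, one sets up a map that sends each ``good'' configuration to each of its successful bridge-completions, counts with multiplicity, and compares expected multiplicities on both sides. I would structure Step~3 (the actual proof of the Gluing Lemma in the paper) as: (i) the deterministic rerouting lemma in a tube; (ii) a lower bound $\Omega(n)$ on the number of disjoint candidate tubes forced by the geometry of $B_{3n}\cap B_n'$ and the crossing conditions; (iii) the multi-valued map / second-moment-style comparison turning ``many chances to bridge, each of probability $\ge p^{N(k)}$'' into ``bridge forms with probability $\ge 1-\ep$ once $\delta$ is small and $n$ is large'' — noting that the lemma as stated quantifies over \emph{all} $n$, so one also needs a separate trivial argument for small $n$ (for $n$ below a threshold depending on $\ep,k$, the hypothesis $1-\delta$ with $\delta$ small can be made to force the conclusion directly, or is vacuous).
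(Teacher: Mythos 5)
Your geometric starting point is right, and you correctly name the multi-valued map principle as the tool, but the concrete plan you write down does not work. The FKG/union bound $\P{A\setminus C}\le(1-p^N)^L$ with $L=\Omega(n)$ deterministic tubes fails for two reasons. For a \emph{deterministic} box $T_j$, the event ``$T_j$ fully open'' does not create a bridge unless the crossing paths actually pass near $T_j$, which is configuration-dependent; so the inclusion $A\setminus C\subset\{A,\text{no }T_j\text{ fully open}\}$ is simply false. And the projections of the three crossings need only meet \emph{somewhere}, possibly in a single column, so there is no deterministic $\Omega(n)$ supply of disjoint near-miss locations. If you instead let the tubes depend on $\omega$ (the places where the crossings genuinely come close), their number can be as small as one, FKG no longer applies to random regions, and --- this is exactly the obstacle the outline flags --- on $A\setminus C$ the conditioning carries negative information along the crossing paths which a naive positive-association bound cannot overcome.

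The paper's proof is a genuine dichotomy on the number $|U(\omega)|$ of near misses (points $z\in B_n'$ whose column $\ol{\{z\}}$ meets the minimal open $S_{3n}$-to-$Z_n$ path $\gamma_{\mathrm{min}}$, with $\ol{z+B_1}$ connected to $\ol{S_n'}$ by a path whose projection comes to distance one from that of $\gamma_{\mathrm{min}}$), and the multi-valued map principle is applied in two opposite directions. When $|U|\ge t$: for each $z\in U$ a local surgery inside $\ol{B_{R+1}}(z)$ reroutes $\gamma_{\mathrm{min}}$ through $z$ and joins it to the $S'_n$-cluster, producing $\ge t$ distinct configurations in $\{S_{3n}\lr{B_{3n}\cup B_n'}S_n'\}$ from which $z$, and hence $\omega$, is recoverable; Lemma~\ref{lem:multi-valued map} then gives $\P{\calX,\,|U|\ge t}\le C(p,k)\,\P{S_{3n}\lr{B_{3n}\cup B_n'}S_n'}/t$. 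When $|U|<t$: closing the edges adjacent to the columns of $U$ that lead into the $S'_n$-cluster severs \emph{one of the two} $Y$-crossings, because any pair of paths from $\ol{S_n'}$ to $\ol{Y_n^-}$ and $\ol{Y_n^+}$ must run through a column of $U$ to get around $\gamma_{\mathrm{min}}$; so $\calX\cap\{|U|<t\}$ maps with multiplicity bounded in terms of $t$ and $k$ into the complement of $\{S_n'\lr{B_n'}Y_n^-,\,S_n'\lr{B_n'}Y_n^+\}$, whose probability is at most $\delta$ by hypothesis. This small-$|U|$ regime is precisely the case your plan has no answer for, and it is where the hypothesis of the lemma is actually used. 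The whole argument is uniform in $n$, so no separate small-$n$ reduction is needed.
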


\vspace{.5cm}

\noindent Lemma~\ref{lem:gluing} and \eqref{eq:crucial 1} imply that
\begin{equation}\label{eq:12}
\limsup_{n\rightarrow \infty}\P{S_{3n}\lr{B_{4n}}S'_n}=1.
\end{equation}
Observe that
\begin{align*}
&\P{S_{3n}\lr{(2n,0)+B_{6n}}(4n,0)+S_{3n}}\\
&\ge
\P{\{S_{3n}\lr{B_{4n}}S'_n\}\cap\{S'_n\lr{(4n,0)+B_{4n}}(4n,0)+S_{3n}\}\cap\{S'_{n}\lr{!B_{n}'!}\partial
  B_{n}'\}}\\
&\ge
\P{\{S_{3n}\lr{B_{4n}}S'_n\}\cap\{S'_n\lr{(4n,0)+B_{4n}}(4n,0)+S_{3n}\}}+\P{S'_{n}\lr{!B_{n}'!}\partial
  B_{n}'}-1\\
&\ge \P{S_{3n}\lr{B_{4n}}S'_n}^2+\P{S'_{n}\lr{!B_{n}'!}\partial B_{n}'}-1.
\end{align*}
The first inequality followed from the fact that paths coming from
$\ol{S_{3n}}$ and $\ol{(4n,0)+S_{3n}}$ and going to $\ol{S'_n}$ must
be connected to each other in $\ol{B'_n}$ by uniqueness of the cluster
in $\overline{B_n'}$ from $\ol{S'_n}$ to $\ol{\partial B'_n}$. The
Harris inequality and the reflection across the axis $\{2n\}\times
\mathbb R$ were used in the last inequality.

Using \eqref{eq:12} and \eqref{eq:1}, we find\begin{align}\label{eq:almost good}
\limsup_{n\rightarrow \infty}\P{S_{3n}\lr{(2n,0)+B_{6n}}(4n,0)+S_{3n}}=1.\end{align}

\subsection{The renormalization step} 

Fix $n\in \mathbb N$ to be chosen below. Call an edge $\{z,z'\}$ of $4n\bbZ^2$ {\em good} if 
\begin{itemize}
\item $z+S_{3n}\lr{R_n}z'+S_{3n}$, with $R_n=\frac{z+z'}2+B_{6n}$,
\item $z+S_{3n}\lr{!z+B_{3n}!}z+\partial B_{3n}$ and
  $z'+S_{3n}\lr{!z'+B_{3n}!}z'+\partial B_{3n}$.
\end{itemize}
Notice that the set of good edges follows a percolation law which is \linebreak
$4$-dependent. In particular, there exists $\eta>0$ such that whenever
the probability to be good exceeds $1-\eta$, the set of good edges
percolates (this fact follows from a Peierls argument presented for
example in \cite[Lemma 1]{balister2005continuum}, or from
the classical result of~\cite{ligget1997domination} comparing $4$-dependent 
percolation to Bernoulli percolation).

Equations \eqref{eq:almost good} and \eqref{eq:1} guarantee the
existence of $n$ such that the ${\bf P}_p$-probability that an edge is
good is larger than $1-\eta$. Since being good depends only on the
state of the edges in a finite box, there exists $q<p$ such that an
edge is good with ${\bf P}_q$-probability larger than $1-\eta$, and
the set of good edges percolates for the percolation of parameter $q$.

By construction, an infinite path of good edges in the coarse-grained 
lattice immediately implies the existence of an infinite path of open 
edges in the original lattice. As a consequence, $q\ge p_c(k)$ and 
therefore $p>p_c(k)$. This concludes the proof of 
${\bf P}_{p_c(k)}[0\longleftrightarrow \infty]=0$ conditionally on 
Lemma~\ref{lem:gluing}.

\subsection{The proof of Lemma~\ref{lem:gluing} (the gluing Lemma)}

First, observe that the lemma holds trivially for $k=0$ by setting 
$\delta(\ep,0)=\ep$. {\em We therefore assume from now on that $k\ge 1$}. 
We will be using the following lemma.
\begin{lemma}\label{lem:multi-valued map}
 Let $s,t>0$. Consider two events $\calA$ and $\calB$ and a map $\Phi$
  from $\calA$ into the set $\mathfrak{P}(\calB)$ of subevents of
  $\calB$. We assume that:
  \begin{enumerate}
  \item for all $\omega\in \calA$, $|\Phi(\omega)|\geq t$,
  \item for all $\omega' \in \calB$, there exists a set $S$ with less than 
  $s$ edges such that $\{\omega: \omega' \in \Phi(\omega)
    \} \subset \{\omega : \omega_{|_{S^c}}=\omega'_{|_{S^c}}\}$.
  \end{enumerate}
  Then,
  \begin{equation*}
    \label{eq:7}
    \P{\calA}\leq\frac{(2/\min\{p,1-p\})^s}{t}\P{\calB}.
  \end{equation*}
\end{lemma}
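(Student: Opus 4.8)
The plan is to prove Lemma~\ref{lem:multi-valued map} by a double-counting (Cauchy--Schwarz-free, purely weighted) argument comparing the total $\mathbf{P}_p$-mass of $\calA$ with that of $\calB$ through the multi-valued map $\Phi$. Concretely, I would estimate the quantity
\[
\Sigma=\sum_{\omega\in\calA}\ \sum_{\omega'\in\Phi(\omega)}\mathbf{P}_p[\omega]
\]
in two ways. On the one hand, by hypothesis (1) each $\omega\in\calA$ contributes at least $t$ terms, so $\Sigma\ge t\,\mathbf{P}_p[\calA]$. On the other hand, I would fix $\omega'\in\calB$ and bound the ``fiber sum'' $\sum_{\omega:\,\omega'\in\Phi(\omega)}\mathbf{P}_p[\omega]$: by hypothesis (2) every such $\omega$ agrees with $\omega'$ off a set $S=S(\omega')$ of at most $s$ edges, so $\omega$ is obtained from $\omega'$ by flipping the states of some subset of $S$. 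Hence the fiber sum is at most $\sum_{\omega\text{ agreeing with }\omega'\text{ off }S}\mathbf{P}_p[\omega]$, and this sum I compare to $\mathbf{P}_p[\omega']$ edge by edge.

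The key elementary estimate is the following: for a single edge $e$, the total $p$-mass of the two states is $1$, and $\mathbf{P}_p[\omega']$ carries either weight $p$ or $1-p$ on $e$; therefore replacing the constraint ``$\omega_e=\omega'_e$'' by ``$\omega_e$ arbitrary'' multiplies the mass by at most $1/\min\{p,1-p\}$. Doing this for each of the (at most $s$) edges of $S$ gives
\[
\sum_{\omega:\,\omega_{|_{S^c}}=\omega'_{|_{S^c}}}\mathbf{P}_p[\omega]\ \le\ \Big(\tfrac{1}{\min\{p,1-p\}}\Big)^{|S|}\mathbf{P}_p[\omega']\ \le\ \Big(\tfrac{1}{\min\{p,1-p\}}\Big)^{s}\mathbf{P}_p[\omega'].
\]
(The crude constant $2/\min\{p,1-p\}$ in the statement leaves room; one can also simply bound $|\{\text{subsets of }S\}|\le 2^s$ and each term by $(1/\min\{p,1-p\})^s\mathbf{P}_p[\omega']$ if one prefers to separate the combinatorial and the reweighting factors, which is presumably why the $2^s$ appears.) Summing over $\omega'\in\calB$ then yields $\Sigma\le (2/\min\{p,1-p\})^{s}\,\mathbf{P}_p[\calB]$, and combining with $\Sigma\ge t\,\mathbf{P}_p[\calA]$ gives exactly
\[
\mathbf{P}_p[\calA]\le\frac{(2/\min\{p,1-p\})^{s}}{t}\,\mathbf{P}_p[\calB].
\]

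I do not expect a genuine obstacle here: the whole content is the interchange of the two summations (legitimate since everything is a finite or absolutely convergent sum over configurations on a countable edge set, and the map $\Phi$ sends configurations to subevents of $\calB$) together with the single-edge reweighting bound. The only points requiring a little care are (i) making precise that $S(\omega')$ depends only on $\omega'$ so that the fiber bound is uniform, which is exactly hypothesis (2); (ii) handling the fact that the ambient edge set is infinite — but $\calA,\calB$ and the flips all live on finitely many edges in every application, or one argues with cylinder events, so the sums factorize over the remaining edges and the untouched coordinates cancel between $\mathbf{P}_p[\omega]$ and $\mathbf{P}_p[\omega']$; and (iii) absorbing the $2^{|S|}$ choices of which edges of $S$ to flip into the stated constant. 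None of these is serious, so the proof is essentially the two-line double count above.
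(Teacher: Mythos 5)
Your argument is correct and is essentially the paper's own proof: both rest on exchanging the order of summation over pairs $(\omega,\omega')$ with $\omega'\in\Phi(\omega)$, together with the single-edge reweighting bound coming from hypothesis (2). The only cosmetic difference is which probability you put inside the double sum — you sum $\mathbf P_p[\omega]$ and then bound the whole fiber sum $\sum_{\omega:\,\omega'\in\Phi(\omega)}\mathbf P_p[\omega]$ at once, whereas the paper sums $\mathbf P_p[\omega']$, compares $\mathbf P_p[\omega']\ge(\min\{p,1-p\})^s\mathbf P_p[\omega]$ per pair, and separately bounds the number of preimages of $\omega'$ by $2^s$; as you note, your bookkeeping in fact yields the slightly sharper constant $(1/\min\{p,1-p\})^s$ in place of $(2/\min\{p,1-p\})^s$.
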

This lemma will enable us to bound from above the probability of $\calA$ when $s$
is small and $t$ is large.
\begin{proof}
  It follows from exchanging the order of the summation on $\omega$
  and on $\omega'\in \Phi(\omega)$:
  \begin{align}
    \sum_{\omega\in\calA}\P{\omega}&\le\frac1{t(\min\{p,1-p\})^s}
    \sum_{\omega\in \calA}\P{\Phi(\omega)}\\
    &=\frac1{t(\min\{p,1-p\})^s}\sum_{\omega'\in
      \calB}\mathrm{Card}\{\omega:\omega'\in
    \Phi(\omega)\}\cdot\P{\omega'}\\
    &\le \frac{2^s}{t(\min\{p,1-p\})^s}\sum_{\omega'\in
      \calB}\P{\omega'}.
  \end{align}
\end{proof}

Let us now explain how the previous statement can be used to prove Lemma~\ref{lem:gluing}.
Fix an arbitrary order $\prec$ on edges emanating from each vertex
of $\slab$, which is invariant under translations of $\bbZ^2$. Also fix an arbitrary order $\ll$ on vertices of $\slab$. Then,
define a total order on self-avoiding paths from $\ol{S_{3n}}$ to
$\ol{Z_n}$ by taking the lexicographical order: for two paths
$\gamma=(\gamma_i)_{i\le r}$ and $\gamma'=(\gamma'_i)_{i\le r'}$, we
set $\gamma < \gamma'$ if one of the following conditions occurs:
\begin{itemize}
\item $r< r'$ and $\gamma=(\gamma'_i)_{i\le r}$,
\item $\gamma_0\ll \gamma_0'$,
\item there exists $k<\min\{r,r'\}$ such that $\gamma_j=\gamma'_j$ for 
  $j\le k$ and $(\gamma_k,\gamma_{k+1})\prec
  (\gamma'_k,\gamma'_{k+1})$.
\end{itemize}

\paragraph{Definition.}{\em Consider $\omega$ with at least one open path from 
$\ol{S_{3n}}$ to $\ol{Z_n}$. Define $\gamma_\mathrm{min}(\omega)$ to 
be the minimal (for the order defined above) open self-avoiding path
  from $\ol{S_{3n}}$ to $\ol{Z_n}$.  
  Let $U(\omega)$ be the set of points $z$ in $B'_n$ with 
  \begin{itemize}
  \item[{\bf P1}] $\ol{\{z\}} \cap \gamma_\mathrm{min}(\omega)\ne \emptyset$,
  \item[{\bf P2}] $\ol{z+B_1}$ is connected to $\ol{S'_n}$ by an open
    path $\pi$, such that the distance between the canonical
    projections of $\pi$ and $\gamma_{\rm min}$  onto
    $\bbZ^2$ is exactly $1$.
  \end{itemize}}

\noindent  Write
  $\calX=\{S_{3n}\lr{B_{3n}}Z_n,S_n'\lr{B_n'}Y_n^-,S_n'\lr{B_n'}Y_n^+\}
  \cap \{S_{3n} \lr{B_{3n}\cup B_n'} S_n'\}^c$. Proving Lemma~\ref{lem:gluing} corresponds to
  proving that the probability of $\calX$ is small whenever the
  probability of $\{S_{3n}\lr{B_{3n}}Z_n,S_n'\lr{B_n'}Y_n^-,S_n'\lr{B_n'}Y_n^+\}$ is close to 1. We proceed in two steps, depending on whether the cardinality of $U(\omega)$ is large or not.
    \begin{fact}
    \label{fact:1}
    Fix $\ep>0$ and $t>0$. There exists $\delta>0$ so that $$\P{S_n'\lr{B_n'}Y_n^-,S_n'\lr{B_n'}Y_n^+}>1-\delta$$ implies
      $\P{\calX\cap \{|U|< t\}}\leq \varepsilon$.
  \end{fact}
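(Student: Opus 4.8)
The plan is to exploit the assumed event $\{S_n'\lr{B_n'}Y_n^-\}\cap\{S_n'\lr{B_n'}Y_n^+\}$ together with the event $\calX$, and to argue that when $|U(\omega)|$ is small, one can modify a bounded number of edges to produce a connection from $\ol{S_{3n}}$ to $\ol{S_n'}$, thereby contradicting (in a probabilistic sense) the fact that $\omega\in\calX$ forbids such a connection. The point is that when $\omega\in\calX$ and $|U(\omega)|<t$, one has open paths from $\ol{S_n'}$ to both $\ol{Y_n^+}$ and $\ol{Y_n^-}$; since $Y_n^+$ and $Y_n^-$ sit on opposite sides of $Z_n$ on the line $\{3n\}\times\bbR$, the canonical projection of the cluster of $\ol{S_n'}$ must cross the projection of $\gamma_{\min}(\omega)$, so the two-dimensional projections of the relevant clusters intersect. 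Hence there is necessarily \emph{at least one} site $z$ satisfying property {\bf P1}; the only reason the clusters themselves might fail to be connected in the slab is that the vertical "fibers" do not match up. The set $U(\omega)$ records precisely the good candidate sites where a short vertical/local modification could glue the two clusters together, and $|U|<t$ says there are few of them.

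The key steps, in order, are as follows. First, I would show that on $\calX$ the event $S_{3n}\lr{B_{3n}}Z_n$ holds (it is one of the defining events of $\calX$) together with $S_n'\lr{B_n'}Y_n^\pm$, so $\gamma_{\min}(\omega)$ is well defined, and by the planar-projection/crossing argument just sketched, $U(\omega)\neq\emptyset$ — indeed there is at least one $z\in B_n'$ with both {\bf P1} and {\bf P2}. Second, at any such $z$, the open path $\pi$ witnessing {\bf P2} comes within projected distance exactly $1$ of $\gamma_{\min}$, so by opening at most a bounded number $s_0=s_0(k)$ of edges inside $\ol{z+B_1}$ one connects $\pi$ to $\gamma_{\min}$, hence $\ol{S_n'}$ to $\ol{S_{3n}}$ — i.e., one leaves $\calX$. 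Third, I would set up the multi-valued map of Lemma~\ref{lem:multi-valued map}: let $\calA=\calX\cap\{|U|<t\}$ and let $\calB$ be the event $\{S_n'\lr{B_n'}Y_n^-\}\cap\{S_n'\lr{B_n'}Y_n^+\}\cap\{S_{3n}\lr{B_{3n}\cup B_n'}S_n'\}$ (a subevent of $\calX^c$ intersected with the good part). For $\omega\in\calA$ the map $\Phi(\omega)$ is built by picking, for each $z\in U(\omega)$, one designated set of $\le s_0$ edges in $\ol{z+B_1}$ and forcing them open (keeping everything else fixed); this yields at least $t$ distinct configurations in $\calB$ if we are slightly careful to make the modifications at distinct sites give distinct outcomes, so condition (1) of Lemma~\ref{lem:multi-valued map} holds with parameter $t$. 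Condition (2) holds with $s=s_0+$(a constant), since recovering $\omega$ from $\omega'\in\calB$ requires knowing only the $O(1)$ edges near the unique glued location (which can be located from $\omega'$ by re-examining $\gamma_{\min}$ and the cluster of $\ol{S_n'}$). Fourth, applying Lemma~\ref{lem:multi-valued map} gives $\P{\calX\cap\{|U|<t\}}\le C(k,p)^{s_0}\,t^{-1}\,\P{\calB}\le C(k,p)^{s_0}\,t^{-1}$; but this is not directly $\le\ep$. The correct reading is: I must instead bound $\P{\calX\cap\{|U|<t\}}$ by comparing against the \emph{complement} — i.e., take $\calB=\{S_{3n}\lr{B_{3n}\cup B_n'}S_n'\}$ with appropriate accounting, so that the hypothesis $\P{S_n'\lr{B_n'}Y_n^\pm}>1-\delta$ forces the probability of the "bad" part $\calX$ to be at most $\delta$ plus the multi-valued bound, and one chooses $t$ afterward — so Fact~\ref{fact:1} should be used for a \emph{fixed large} $t$, with $\delta$ chosen last. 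I would therefore phrase it as: fix $t$; by Lemma~\ref{lem:multi-valued map} with $s=s_0$, $\P{\calX\cap\{|U|<t\}}\le (2/\min\{p,1-p\})^{s_0}t^{-1}$, and this quantity, \emph{for $t$ large enough depending on $\ep,k$}, is $\le\ep$ — so the role of $\delta$ is merely to guarantee that $\gamma_{\min}$ and the witnessing path $\pi$ exist with enough room, which is automatic once $\P{S_n'\lr{B_n'}Y_n^\pm}>1-\delta$; more precisely $\delta$ controls nothing here and one takes $\delta=1$, the hypothesis being vacuous — no: the honest statement is that $\calX$ itself already encodes $S_n'\lr{B_n'}Y_n^\pm$, so the displayed hypothesis on $\delta$ is only used to ensure $\P{\calX}$ can be compared, and I would match the paper's bookkeeping by choosing $t=t(\ep,k)=2\,(2/\min\{p,1-p\})^{s_0}/\ep$ and then \emph{any} $\delta\le 1$ works.

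The main obstacle I expect is making condition (1) of Lemma~\ref{lem:multi-valued map} genuinely yield $t$ \emph{distinct} images: naively, modifying edges near two different sites $z,z'\in U(\omega)$ could produce the same configuration $\omega'$, or the modification at $z$ might accidentally destroy $\gamma_{\min}$ (since forcing edges open can change which path is minimal). This is the delicate point — one must either (a) show that opening edges can only make $\gamma_{\min}$ "shift" in a controlled way that still leaves a connection $\ol{S_{3n}}\lr{}\ol{S_n'}$, or (b) choose the modifications to act on edges whose states are \emph{prescribed closed} by $\omega$ (so the original $\omega$ is recoverable by re-closing exactly the modified edges, giving injectivity of $z\mapsto\omega'$ for fixed $\omega$, hence $|\Phi(\omega)|\ge|U(\omega)|$ — but we want a \emph{lower} bound $\ge t$ and we are in the regime $|U|<t$!). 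Resolving this tension is the crux: the map must go the other way. The right construction is $\Phi(\omega)$ = configurations obtained by picking \emph{one} site $z\in U(\omega)$ and \emph{closing} (or opening) a bounded edge-set so that $\omega'\in\calB'$ for a suitable target $\calB'$ whose probability is small; then $|\Phi(\omega)|\ge|U(\omega)|$ is the wrong direction unless $|U|$ is large. Hence Fact~\ref{fact:1} — the \emph{small}-$U$ case — must instead use $U=\emptyset$-type reasoning: on $\calX\cap\{|U|<t\}$ the connection $\ol{S_{3n}}\lr{}\ol{S_n'}$ fails \emph{despite} projected crossings, which can only happen through at most $t$ "near-misses", and a union bound over the $\le t$ possible near-miss locations, each contributing a factor $\delta$ via the hypothesis, gives $\P{\calX\cap\{|U|<t\}}\le t\cdot(\text{something}\to 0\text{ as }\delta\to 0)$. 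I would spend most of the write-up pinning down this union-bound-over-locations argument and verifying the near-miss structure is exactly captured by $U$.
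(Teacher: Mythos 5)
Your proposal correctly identifies that Lemma~\ref{lem:multi-valued map} should be the engine, and you put your finger on the real obstruction: since $|U(\omega)|<t$, you cannot get the lower bound $|\Phi(\omega)|\ge t$ from $U$ itself, so the ``open an edge near some $z\in U$ to connect $\ol{S_{3n}}$ to $\ol{S_n'}$'' map goes in the wrong direction. But you never resolve the tension, and your final proposal (a ``union bound over $\le t$ near-miss locations, each contributing a factor $\delta$'') is not a proof and, in fact, is not the right shape of bound.

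The missing idea is that for this case one should \emph{close} edges, not open them, and the target event of the map should be the small-probability complement $\{S_n'\lr{B_n'}Y_n^-,\,S_n'\lr{B_n'}Y_n^+\}^c$, whose probability is $<\delta$ by hypothesis. Concretely: given $\omega\in\calX\cap\{|U|<t\}$, form $\omega'$ by closing, for each $z\in U(\omega)$, every edge $\{u,v\}$ with $u\in\ol{\{z\}}$ and $v$ connected to $\ol{S_n'}$. Any pair of open paths in $\ol{B_n'}$ from $\ol{S_n'}$ to $\ol{Y_n^+}$ and to $\ol{Y_n^-}$ must (by the planar projection argument you sketched) pass through some $\ol{\{z\}}$ with $z\in U(\omega)$, hence uses an edge that was just closed; so $\omega'$ lies in the complement. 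Since at most $6kt$ edges were modified and the preimage of each $\omega'$ is controlled (because $U(\omega)=U(\omega')$ and $\gamma_{\min}(\omega)=\gamma_{\min}(\omega')$ for every preimage — property {\bf P1} guarantees no edge of $\gamma_{\min}$ was closed), Lemma~\ref{lem:multi-valued map} with $t=1$ gives
$\P{\calX\cap\{|U|<t\}}\le (2/\min\{p,1-p\})^{6kt}\,\P{\{S_n'\lr{B_n'}Y_n^-,S_n'\lr{B_n'}Y_n^+\}^c}\le (2/\min\{p,1-p\})^{6kt}\,\delta$,
and one then chooses $\delta$ small (after $t$ is fixed) so that this is $\le\ep$. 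Note that $\delta$ is absolutely not vacuous — it supplies the smallness — and that $t$ enters as an \emph{exponent} in the preimage-count constant, not as a multiplicative prefactor, so your proposed bound $t\cdot(\text{small})$ does not match what the argument yields.
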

  \begin{proof}[Proof of Fact~\ref{fact:1}]
    Let $\omega \in \calX$ such that $|U(\omega)|<t$. Define $\omega'$
    to be the configuration obtained from $\omega$ by closing, for any
    $z\in U(\omega)$, all the edges $\{u,v\}$ such that
    $u\in\ol{\{z\}}$ and $v$ is connected to $\ol{S_n'}$ by an open
    path.
    
    Observe that $\omega'$ cannot contain two open paths in
    $\ol{B'_n}$ from $\ol{S'_n}$ to $\ol{Y_n^-}$ and $\ol{Y_n^+}$
    respectively. Indeed, an open path in $\omega'$ must be in
    $\omega$. Furthermore, two paths from $\ol{S'_n}$ to $\ol{Y_n^-}$
    and $\ol{Y_n^+}$ respectively must intersect at least one set of
    the form $\ol{\{z\}}$ with $z$ in $U(\omega)$. But this implies
    that one edge of one of these two paths was turned to closed in
    $\omega',$ which is a contradiction. We therefore constructed a map

    \begin{equation*}
      \label{eq:8}
      \begin{array}{lccc}
        \Phi:&\calX\cap \{|U|<
        t\}&\longrightarrow&\{S_n'\lr{B_n'}Y_n^-,S_n'\lr{B_n'}Y_n^+\}^c
      \end{array}
    \end{equation*}
    mapping a configuration $\omega$ to $\omega'$. For any $\omega'$
    in the image of $\Phi$, the set $\{\omega:\Phi(\omega)=\omega'\}$
    contains only configurations that are equal to $\omega'$ except
    possibly on the edges adjacent to $U (\omega')$. Here, we use the
    fact that $U(\omega')=U(\omega)$ and
    $\gamma_{\mathrm{min}}(\omega')=\gamma_{\mathrm{min}}(\omega)$ for
    any pre-image of $\omega'$ (since {\bf P1} guarantees that no edge
    of $\gamma_{\mathrm{min}}(\omega')$ was closed in the process).
    Lemma~\ref{lem:multi-valued map} can be applied to obtain
    \begin{equation*}
      \label{eq:10}
      \P{\calX\cap \{|U|< t\}}\leq (2/\min\{p,1-p\})^{6kt} \P{\left\{S_n'\lr{B_n'}Y_n^-,S_n'\lr{B_n'}Y_n^+\right\}^c}.
    \end{equation*}
    Fact~\ref{fact:1} follows immediately. \end{proof}

    \begin{fact}
    \label{fact:2}
     Fix $\ep>0$. For $t$ large enough,
     \begin{equation}
    \P{\calX\cap \{|U| \ge t\}}\leq  \ep \P{S_{3n} \lr{B_{3n}\cup B_n'} S_n'}.\label{eq:11}
    \end{equation}
  \end{fact}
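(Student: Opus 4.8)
The plan is to invoke the multi-valued map principle (Lemma~\ref{lem:multi-valued map}) with
$\calA=\calX\cap\{|U|\ge t\}$ and $\calB=\{S_{3n}\lr{B_{3n}\cup B_n'}S_n'\}$, building a map $\Phi$ that, starting from a configuration in $\calA$, \emph{opens} $O(k)$ edges at each near-miss point recorded by $U$ so as to manufacture a connection from $\ol{S_{3n}}$ to $\ol{S_n'}$. Given $\omega\in\calA$, I would first extract from $U(\omega)$ a maximal family $V(\omega)$ of points pairwise at $\ell^\infty$-distance at least an absolute constant $R$; a packing bound gives $|V(\omega)|\ge|U(\omega)|/R^2\ge t/R^2$. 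For $z\in V(\omega)$, property {\bf P1} provides a vertex $(z,i_z)\in\gamma_{\mathrm{min}}(\omega)$ and property {\bf P2} provides an open path $\pi_z$, which may be taken in $\ol{B_n'}$, from some vertex $(w_z,l_z)$ with $w_z\in(z+B_1)\cap B_n'$ to $\ol{S_n'}$. Let $S_z$ be a set of at most $k+2$ edges -- a vertical run in the column over $z$ together with one or two horizontal edges reaching $(w_z,l_z)$ -- chosen so that it lies in $\ol{B_{3n}\cup B_n'}$ and in a fixed set $E(z)$ of $O(k)$ edges localised within $\ell^\infty$-distance $R$ of the column over $z$. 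On $\calX$ the geodesic $\gamma_{\mathrm{min}}(\omega)$ and the path $\pi_z$ lie in different clusters, hence at least one edge of $S_z$ is closed in $\omega$; opening all of $S_z$ joins them and produces a configuration $\omega_z\in\calB$. Put $\Phi(\omega)=\{\omega_z:z\in V(\omega)\}$; since the $E(z)$ are pairwise disjoint and each $S_z$ contains an edge flipped from closed to open, the $\omega_z$ are distinct and $|\Phi(\omega)|\ge t/R^2$.

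The second hypothesis of Lemma~\ref{lem:multi-valued map} is the delicate part. Any preimage $\omega$ of a given $\omega'\in\calB$ agrees with $\omega'$ off $S_z\subseteq E(z)$, where $z$ is its gluing centre, so it is enough to bound by a constant $C(k)$ the cardinality of the set $Z(\omega')$ of admissible gluing centres: then $S=\bigcup_{z\in Z(\omega')}E(z)$ has $O_k(1)$ edges and satisfies the hypothesis. To control $Z(\omega')$ I would first show that the bridging leaves the geodesic unchanged, $\gamma_{\mathrm{min}}(\omega')=\gamma_{\mathrm{min}}(\omega)=:\gamma$, so that $\gamma$ is read off from $\omega'$ -- this is exactly where quasi-planarity of the slab is used, as one must rule out that the $O(k)$ newly-open edges sitting in $\ol{B_n'}$ create a shorter open self-avoiding path from $\ol{S_{3n}}$ to $\ol{Z_n}$ by routing through the cluster of $\ol{S_n'}$ (in the residual case where $\gamma_{\mathrm{min}}$ does change, one instead notes that $\gamma_{\mathrm{min}}(\omega')$ then runs through the bridge, so $z$ still lies within $\ell^\infty$-distance one of $\mathrm{proj}(\gamma_{\mathrm{min}}(\omega'))$). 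One then localises $z$ among the points of $\mathrm{proj}(\gamma)\cap B_n'$ that are bottlenecks for the connection $\ol{S_{3n}}\lr{}\ol{S_n'}$ in $\omega'$, in the sense that closing $E(z)$ disconnects $\ol{S_{3n}}$ from $\ol{S_n'}$ (the resulting configuration being dominated by some $\omega\in\calX$); combined with the minimality of $\gamma$ and of the $\ol{S_{3n}}$--$\ol{S_n'}$ geodesic in $\omega'$, together with the local nature of the modification, this should force $|Z(\omega')|\le C(k)$. I expect this recovery bookkeeping to be the main obstacle.

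Granting it, Lemma~\ref{lem:multi-valued map} gives
\begin{equation*}
\P{\calX\cap\{|U|\ge t\}}\ \le\ \frac{R^2\,(2/\min\{p,1-p\})^{s(k)}}{t}\;\P{S_{3n}\lr{B_{3n}\cup B_n'}S_n'},
\end{equation*}
with $s(k)=O_k(1)$ the number of edges in $S$ and $R$ absolute; choosing $t\ge t_0(\ep,k,p):=R^2\,\ep^{-1}(2/\min\{p,1-p\})^{s(k)}$ makes the prefactor at most $\ep$, which is \eqref{eq:11}. In spirit this is the slab counterpart of the obstruction to renormalising on $\bbZ^3$: ``crossing paths need not intersect'' is what forces the explicit bridge, and the fact that a bridge costs only finitely many edges, because the slab has finite thickness, is precisely what allows the recovery step -- and hence the whole argument -- to close.
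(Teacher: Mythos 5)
Your overall strategy---apply Lemma~\ref{lem:multi-valued map} with $\calA=\calX\cap\{|U|\ge t\}$ and $\calB=\{S_{3n}\lr{B_{3n}\cup B_n'}S_n'\}$, producing one modified configuration per point of $U(\omega)$---is the same as the paper's. The gap is in the second hypothesis of the lemma, which you correctly flag as ``the delicate part'' but do not close, and which is in fact the heart of the proof. Your map only \emph{opens} a bridge of $O(k)$ edges near $z$. From the resulting $\omega'$ there is no way to read off where the bridge was inserted: nothing in $\omega'$ distinguishes the newly opened edges from edges that were already open, and the set of admissible insertion points is not bounded by a constant. Your proposed localisation---$z$ must be a ``bottleneck'' on $\mathrm{proj}(\gamma_{\mathrm{min}})\cap B_n'$ such that closing $E(z)$ disconnects $\ol{S_{3n}}$ from $\ol{S_n'}$---does not give $|Z(\omega')|\le C(k)$: if the connection between the two clusters in $\omega'$ runs along a long thin corridor hugging $\gamma_{\mathrm{min}}$, every point of that corridor is such a bottleneck, so $|Z(\omega')|$ can be of order $n$. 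Then $S=\bigcup_{z\in Z(\omega')}E(z)$ has unboundedly many edges and Lemma~\ref{lem:multi-valued map} yields nothing.

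The paper's construction is engineered precisely to make $z$ recoverable, and it is not a pure opening map. It first \emph{closes} every edge of $\ol{B_{R+1}}(z)$ except those of $\gamma_{\mathrm{min}}$ and $\pi$ in the annulus, then opens three \emph{disjoint} arms $\gamma_u,\gamma_v,\gamma_w$ from $z$ to the entry point $u'$ of $\gamma_{\mathrm{min}}$ in $\ol{B_R}(z)$, its exit point $v'$, and a point $w'$ connected to $\ol{S_n'}$ outside the ball, with $(z,v)\prec(z,w)$. This forces $\gamma_{\mathrm{min}}(\omega^{(z)})$ to coincide with $\gamma_{\mathrm{min}}(\omega)$ up to $u'$, pass through $\gamma_u$, $z$, $\gamma_v$, and coincide again after $v'$; in particular your assumption that $\gamma_{\mathrm{min}}$ is unchanged is not justified either---the new open edges can alter the minimal path, and the paper controls \emph{how} it changes rather than claiming it does not. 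The upshot is that $z$ is the unique site of $\gamma_{\mathrm{min}}(\omega^{(z)})$ joined to $\ol{S_n'}$ without using edges of that path; this characterisation is what bounds the preimage set by a single box $\ol{B_{R+1}}(z)$, and it also gives distinctness of the images without discarding points of $U$ (so no packing step and no $R^2$ loss). Without a device of this kind, your argument does not go through.
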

  \begin{proof}[Proof of Fact~\ref{fact:2}]
    For $R\ge 1$ and $z=(z_1,z_2,z_3) \in \slab$, we write
    $\ol{B_R}(z)$ for $\ol{(z_1,z_2)+B_R}$. Fix $R\ge2$ in such a way
    that for any site $z\in \slab$, for any three distinct neighbors
    $u$, $v$, $w$ of $z$ and any three distinct sites $u'$, $v'$, $w'$
    on the boundary of $\ol{B_R}(z)$, there exist three disjoint
    self-avoiding paths in $\ol{B_R}(z)\setminus\{z\}$ connecting $u$
    to $u'$, $v$ to $v'$ and $w$ to $w'$. Note that such an $R$ exists
    since in this section, $k$ is assume to be strictly larger than 0.
    \medbreak
    \noindent {\bf Remark. }{\em For the slab, one could take $R=2$.
      Nevertheless, taking larger $R$ becomes necessary when dealing
      with finite range percolation. Since the proof is not more
      complicated, we choose to present it with an arbitrary $R$.}
  \medbreak
  \begin{figure}[h]
  \centering
  \includegraphics[width=7cm]{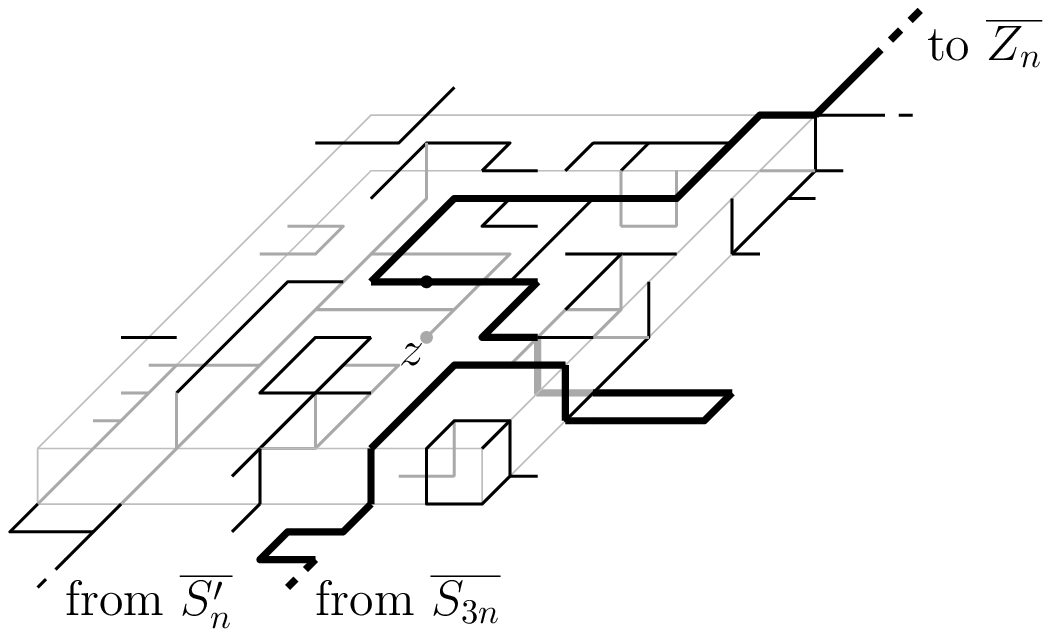}
  \includegraphics[width=7cm]{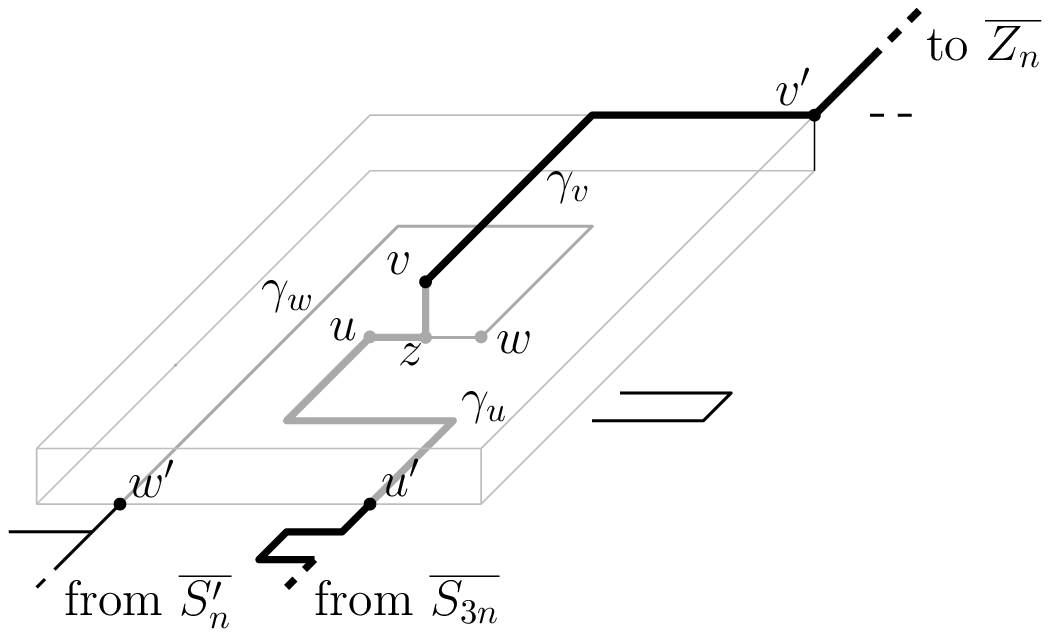}
  \caption{Two configurations $\omega$ and $\omega^{(z)}$. In both
    cases, $\gamma_{\mathrm{min}}$ is depicted in bold, and closed
    edges are not drawn for clarity. Note that at the end of the
    construction, there are exactly three open edges connecting a
    vertex of $\ol{B_R}(z)$ to a vertex in the complement of
    $\ol{B_R}(z)$.}
  \label{fig:last}
\end{figure}
    Fix $\omega\in \calX$ such that  $|U(\omega)| \ge t$ and pick $z\in U(\omega)$.  Construct the configuration $\omega^{(z)}$ as follows (see Fig.~\ref{fig:last} for an illustration of the construction):
   \begin{enumerate}
    \item Choose $u,v,w$ in such a way that $(z,u)$, $(z,v)$ and $(z,w)$ are three distinct edges with $(z,v)\prec(z,w)$. 
    
    Define $u'$ and $v'$ to be respectively the first and last (when going from $S_{3n}$ to $Y_n^+\cup Y_n^-$) vertices of $\gamma_{\mathrm{min}}(\omega)$ which are in $\ol{B_R}(z)$ (these two vertices exist and are distinct since $\gamma_{\mathrm{min}}(\omega)$ intersects the set $\ol{B_1}(z)$ by {\bf P2}). 
    
    Choose $w'$ on the boundary of $\ol{B_R}$ in such a way that
    there exists an open self-avoiding path $\pi$ from $w'$ to
    $\ol{S_n'}$, all the edges of which lie outside $\ol{B_R}(z)$ (this path exists by {\bf P1}). Since $\omega\in\calX$, we also have that $w'$ is different from $u'$ and $v'$ (otherwise $S_{3n}\longleftrightarrow S'_n$ in $\omega$).
      
\item Close all edges of $\omega$ in $\ol{B_{R+1}}(z)$ at the exception of the edges of $\ol{B_{R+1}}(z)\setminus\ol{B_R}(z)$ which are in $\gamma_{\mathrm{min}}(\omega)$ or $\pi$.

\item Open the edges $(z,u)$, $(z,v)$ and $(z,w)$, together with three disjoint self-avoiding paths $\gamma_u$, $\gamma_v$ and $\gamma_w$ in $\ol{B_R}(z)\setminus\{z\}$ connecting $u$ to $u'$, $v$ to $v'$, and $w$ to $w'$. 
\end{enumerate}
By construction, $\omega^{(z)}$ is in $\{S_{3n} \lr{B_{3n} \cup B'_n} S'_n\}$ and we can define the map
    \begin{equation*}
      \label{eq:9}
       \begin{array}{lccc}
        \Psi:&\calX\cap \{|U|> t\}&\longrightarrow&\mathfrak{P}(S_{3n} \lr{B_{3n} \cup B'_n} S'_n)\\
        &\omega&\longmapsto&\{\omega^{(z)},\,z\in U(\omega) \}.
      \end{array}
    \end{equation*}
We wish to apply Lemma~\ref{lem:multi-valued map}. In order to do so, the following observation will be useful. 

Working with the lexicographical order implies that $\gamma_{\mathrm
  {min}}(\omega^{(z)})$ and $\gamma_{\mathrm {min}}(\omega)$
necessarily coincide up to $u'$. Thanks to the second step, the degree
of $u'$ in $\omega^{(z)}$ is 2. This fact forces any self-avoiding
open path from $\ol{S_{3n}}$ to $\ol{Z_n}$ containing the minimal path
up to $u'$ to contain $\gamma_u$. Now (this is the crucial point of
the construction), we have that $(z,v)\prec(z,w)$. Therefore, even
though there could exist an open path from $z$ to $\ol{Z_n}$ passing
by $w$, {\em the minimal path will still be going through $v$}. Hence,
the continuation of the minimal path goes through $v$ and thus
contains $\gamma_v$ for the same reason that it was including
$\gamma_u$. From $v'$, the minimality of $\gamma_{\mathrm
  {min}}(\omega)$ implies that $\gamma_{\mathrm {min}}(\omega^{(z)})$
and $\gamma_{\mathrm {min}}(\omega)$ coincide from this vertex up to
the end.

Since no site of $\gamma_{\mathrm {min}}(\omega)$ is connected to
$\ol{S'_n}$ in $\omega$ (simply because $\omega\in \calX$), the previous
paragraph implies that $z$ is {\em the only site on $\gamma_{\mathrm
    {min}}(\omega^{(z)})$ to be connected to $\ol{S'_n}$ without using any
  edge in $\gamma_{\mathrm {min}}(\omega^{(z)})$}. \medbreak We are
now in a position to apply Lemma~\ref{lem:multi-valued map}. The
configurations $\omega^{(z)}$ are all distinct since either
$\gamma_{\mathrm {min}}(\omega^{(z)})\ne \gamma_{\mathrm
  {min}}(\omega^{(z')})$ (which readily implies that the configurations
are distinct), or $\gamma_{\mathrm
  {min}}(\omega^{(z)})=\gamma_{\mathrm {min}}(\omega^{(z')})$ but then
$z=z'$ by the characterization of $z$ (and $z'$) above.

Furthermore, consider a pre-image $\omega$ of $\omega'$ and assume
that $\omega'=\omega^{(z)}$ for some $z\in \slab$. The discussion
above shows that $z$ is determined uniquely. Beside, the
configurations  $\omega$ and
$\omega^{(z)}$ differ only in $\ol{B_{R+1}}(z)$.

In conclusion, the map $\Phi$ verifies the hypotheses of Lemma~\ref{lem:multi-valued map} with
    $s$ equal to the number of edges in $\ol{B_{R+1}}$. This gives
    \begin{equation*}
      \label{eq:6}
      \P{\calX\cap \{|U| > t\}}\leq \frac{(2/\min\{p,1-p\})^C}{t} \P{S_{3n} \lr{B_{3n}\cup
          B'_n} S'_n}.
    \end{equation*}
    Choosing $t$ large enough concludes the proof.
 \end{proof}
\noindent Fix $\ep>0$. Choosing first $t$ as in Fact~\ref{fact:2} and
then $\delta$ as in Fact~\ref{fact:1} conclude the proof of Lemma~\ref{lem:gluing}.

\subsection{The proof of Proposition~\ref{prop:explosion}}

\begin{proof}[Proof of Proposition~\ref{prop:explosion}]Recall the result of \cite{grimmett1990supercritical} yielding that $p_c(k)$ tends to $p_c(\bbZ^3)$ as $k$ tends to infinity.

Let $p>p_c(\bbZ^3)$. Since the infinite cluster is unique almost
surely, and since there exits an infinite cluster in ${\rm Slab}_k$
for any $k$ sufficiently large (simply choose $k$ so that $p_c(k)<p$), we obtain that
$${\bf P}_p\big[0\stackrel{\bbZ^3}\longleftrightarrow \infty\big]={\bf P}_p\Big[\bigcup_{k\ge0}\{0\stackrel{\slab}\longleftrightarrow \infty\}\Big],$$
from which we deduce that 
\begin{align*}{\bf P}_p\big[0\stackrel{\bbZ^3}\longleftrightarrow \infty\big]&=\lim_{k\rightarrow\infty}{\bf P}_p\big[0\stackrel{\slab}\longleftrightarrow \infty\big]\le \lim_{k\rightarrow\infty}f(p-p_c(k))=f(p-p_c(\bbZ^3)).\end{align*}
As $p$ tends to $p_c(\bbZ^3)$, the continuity of $f$ implies that 
$${\bf P}_{p_c(\bbZ^3)}\big[0\stackrel{\bbZ^3}\longleftrightarrow \infty\big]=0.$$
\end{proof}

\paragraph{Acknowledgements}
We would like to thank I. Benjamini for suggesting this problem to us,
as well as M. Damron, G. Kozma, V. Beffara and C. Newman for very
useful discussions. This project was completed during stays at the
ETH-Z{\"u}rich, Universit{\'e} de Gen{\`e}ve, Weizmann Institute and
IMPA-Rio de Janeiro. We wish to thank all the institutions for their
financial support and hospitality. The work was supported by ESF-RGLIS
network. The first author acknowledges support from the ERC grant AG
CONFRA as well as the FNS. The second author was supported by
Brazilian CNPq grants 308787/2011-0 and 476756/2012-0 and FAPERJ grant
E-26/102.878/2012-BBP. The third author was supported by the ANR grant
ANR-10-BLAN-0123.

\nocite{martineauTassion2013locality}
\bibliographystyle{alpha}
\bibliography{references}
\vspace{2cm}
\raggedleft
{\sc 
  Département de Mathématiques
  \smallskip
  
  Université de Genève
  \smallskip

  Genève, Switzerland
  \smallskip}
  
\texttt{hugo.duminil@unige.ch}
  \bigskip
  
  {\sc IMPA
    \smallskip
    
    Rio de Janeiro, Brazil
    \smallskip}
  
  \texttt{vladas@impa.br}
   \bigskip
  
  {\sc
    UMPA, Ens de Lyon
    \smallskip
    
    Lyon, France
    \smallskip}
  
  \texttt{vincent.tassion@ens-lyon.fr}

\end{document}